\newtheorem{theorem}{Theorem}%[section]
\newtheorem{lemma}[theorem]{Lemma}
\newtheorem{proposition}[theorem]{Proposition}
\newtheorem{definition}[theorem]{Definition}
\newtheorem{remark}[theorem]{Remark}
\newtheorem{remarks}[theorem]{Remarks}
\newtheorem{corollary}[theorem]{Corollary}
\begin{document}
\title{A pointwise bipolar theorem}
\author{Daniel Bartl
\and Michael Kupper} 
\thanks{Department of Mathematics, University of Konstanz, daniel.bartl@uni-konstanz.de
	\and Department of Mathematics, University of Konstanz, kupper@uni-konstanz.de.}
\keywords{Bipolar theorem, convex closed sets, duality, robust finance,transport duality, semistatic hedging}
\date{\today}
\subjclass[2010]{46N10, 46N30, 91G10}
%	46N10  	Applications in optimization, convex analysis, mathematical programming, economics
%	46N30  	Applications in probability theory and statistics
%	91G10  	Portfolio theory 

\begin{abstract} 
We provide a pointwise bipolar theorem for $\liminf$-closed convex sets of positive Borel measurable functions on a $\sigma$-compact metric space without the assumption that the polar is a tight set of measures.
As applications we derive a version of the transport duality under non-tight marginals, and a superhedging duality for semistatic hedging in discrete time.
\end{abstract}

\maketitle
\setcounter{equation}{0}

\section{Introduction}	

Given a dual pair of vector spaces $(X,Y,\langle\cdot,\cdot\rangle)$, the bipolar theorem states that every $\sigma(X,Y)$-closed, convex set $A$ with $0\in A$ is equal to its bipolar $A^{\circ\circ}$, where we recall $A^\circ=\{y\in Y: \langle x,y\rangle\leq 1\text{ for all }x\in A\}$ and $A^{\circ\circ}=\{x\in X: \langle x,y\rangle\leq 1\text{ for all }y\in A^\circ\}$.
The result is a straightforward application of the Hahn-Banach separation theorem for locally convex topological vector spaces. Motivated by applications in mathematical finance, Brannath and Schachermayer \cite{brannath1999bipolar} provide a version of the bipolar theorem on the cone $L^0_+:=L^0_+(\Omega,\mathcal{F},P)$ endowed with the topology induced by convergence in probability which is not locally convex: 
A monotone (see the definition below) nonempty set $A\subset L^0_+$ is convex and closed w.r.t.~convergence in probability if and only if $A=A^{\circ\circ}$. Here 
$A^\circ=\{ g\in L^0_+ : E_P[fg] \leq 1 \text{ for all } f\in A\}$ 
and $A^{\circ\circ}:=\{ f\in L^0_+ : E_P[fg]\leq 1 \text{ for all } g\in A^\circ\}$.	

In the present work we focus on a pointwise version of the bipolar theorem of Brannath and Schachermayer 
when functions that are almost surely equal w.r.t.~a reference measure are not identified.
To that end, let $\mathcal{L}_+^0=\mathcal{L}_+^0(\Omega)$ be the set of all Borel measurable functions
$f\colon\Omega\to[0,+\infty]$, where  $\Omega$ is a $\sigma$-compact metric space. Denoting by $ca_+$ 
the set of all finite positive Borel measures on $\Omega$, we define
$\langle f,\mu\rangle:=\int f\,d\mu$
for all measurable $f$ which are bounded form below and $\mu\in ca_+$. The polar and bipolar of a subset 
$H\subset \mathcal{L}_{+}^0$ are given by
\begin{align*}
H^\circ&:=\left\{\mu\in ca_+: \langle f,\mu\rangle\le 1\mbox{ for all }f\in H\right\},\\
H^{\circ\circ}&:=\left\{f \in \mathcal{L}_{+}^0: \langle f,\mu\rangle\le 1\mbox{ for all }\mu\in H^\circ\right\}.
\end{align*}
A set $H\subset \mathcal{L}^0_+$ is called monotone, if  $f\in H$ for all $f\in \mathcal{L}_+^0$ such that $f\leq h$ for some $h\in H$. Further, $H$ is called $\liminf$-closed whenever $\liminf_n h_n\in H$ for every sequence $(h_n)$ in $H$, and regular if $\sup_{h\in H\cap U_b}\langle h,\mu\rangle=\sup_{h\in H\cap C_b}\langle h,\mu\rangle$ for all $\mu\in ca_+$, or equivalently, if $(H\cap C_b)^\circ=(H\cap U_b)^\circ$. Here $C_b$ and $U_b$ denote the spaces of all bounded functions $f:\Omega\to\mathbb{R}$ that are continuous and upper semicontinuous, respectively. Then the following pointwise version of the bipolar theorem on $\mathcal{L}^0_+$ holds:

\begin{theorem}
\label{thm:main}
	Let $H$ be a nonempty monotone regular subset of $\mathcal{L}_{+}^0$. 
	Then $H=H^{\circ\circ}$ if and only if $H$ is convex and closed under $\liminf$.	
\end{theorem}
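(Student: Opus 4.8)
\emph{Strategy.} I would prove the two directions separately, with essentially all the content sitting in the inclusion $H^{\circ\circ}\subseteq H$. Throughout I use that $H^{\circ\circ}$ is automatically convex, monotone and $\liminf$-closed for \emph{any} $H$: for fixed $\mu$ the slab $\{f:\langle f,\mu\rangle\le 1\}$ is convex and monotone, so the intersection $H^{\circ\circ}$ is convex and monotone; and if $h_n\in H^{\circ\circ}$ then Fatou's lemma gives $\langle\liminf_n h_n,\mu\rangle\le\liminf_n\langle h_n,\mu\rangle\le 1$ for every $\mu\in H^\circ$, so $\liminf_n h_n\in H^{\circ\circ}$. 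This already settles the ``only if'' direction, since $H=H^{\circ\circ}$ then inherits convexity and $\liminf$-closedness. For ``if'', the inclusion $H\subseteq H^{\circ\circ}$ is immediate from the definitions, and it remains to prove $H^{\circ\circ}\subseteq H$.

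\emph{Reducing the polar to continuous test functions.} The first key step is the identity $H^\circ=(H\cap U_b)^\circ=(H\cap C_b)^\circ$. As $H\cap C_b\subseteq H\cap U_b\subseteq H$, only $(H\cap U_b)^\circ\subseteq H^\circ$ needs an argument. Fix $\mu\in(H\cap U_b)^\circ$ and $f\in H$. Since every finite Borel measure on a metric space is inner regular, $\langle f,\mu\rangle=\sup\{\langle u,\mu\rangle: u\in U_b,\ 0\le u\le f\}$, and each such $u$ lies in $H\cap U_b$ by monotonicity, whence $\langle u,\mu\rangle\le 1$ and therefore $\langle f,\mu\rangle\le 1$; thus $\mu\in H^\circ$. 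The remaining equality $(H\cap U_b)^\circ=(H\cap C_b)^\circ$ is exactly the regularity hypothesis. Consequently $H^{\circ\circ}=(H\cap C_b)^{\circ\circ}$, so separating measures may be tested against bounded continuous functions alone.

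\emph{Reducing the target to bounded, compactly supported functions.} Writing $\Omega=\bigcup_n K_n$ with $K_n$ compact and increasing, take $f_0\in H^{\circ\circ}$ and set $g_k:=(f_0\wedge k)\mathbf 1_{K_k}$. Then $g_k\le f_0$, so $g_k\in H^{\circ\circ}$ by monotonicity, while $g_k\uparrow f_0$ pointwise. If each $g_k$ can be shown to lie in $H$, then $f_0=\liminf_k g_k\in H$ by $\liminf$-closedness. Hence it suffices to prove that every bounded $f_0\in H^{\circ\circ}$ supported in a single compact $K$ belongs to $H$.

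\emph{The separation, and the main obstacle.} For such an $f_0\notin H$ I would produce $\mu\in H^\circ$ with $\langle f_0,\mu\rangle>1$ by a Hahn--Banach argument on the compact set $K$, where $C(K)^\ast=ca(K)$. The set $B_K:=\{h|_K:h\in H\cap C_b\}$ is convex and solid in $C(K)_+$: given $h\in H\cap C_b$ and $\phi\in C(K)$ with $0\le\phi\le h|_K$, a Tietze extension of $\phi$ truncated as $\tilde\phi\wedge h$ yields an element of $H\cap C_b$ whose restriction to $K$ is $\phi$. Separating a point from the sup-norm-closed solid convex hull of $B_K$ then produces a functional that solidity forces to be positive and that $0\in B_K$ lets us normalize into the polar; being carried by $K$, it defines an element of $(H\cap C_b)^\circ=H^\circ$. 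The genuine difficulty is that $f_0$ is merely Borel whereas the available test functions are continuous, and a Borel function need not be a $\liminf$ of continuous (indeed not even of upper semicontinuous) minorants, so one cannot simply separate a continuous minorant of $f_0$. Overcoming this is precisely the role of the regularity hypothesis: combined with inner regularity of $\mu$ on $K$, which gives $\langle f_0,\mu\rangle=\sup\{\langle u,\mu\rangle:u\in U_b,\ u\le f_0\}$, and with the $\liminf$-closedness of $H$, it should allow the continuous-level separation to be upgraded to the Borel target and yield the strict inequality. I expect this bridging step to be the crux of the whole argument.
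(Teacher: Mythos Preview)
Your ``only if'' direction, the identity $H^\circ=(H\cap U_b)^\circ=(H\cap C_b)^\circ$, and the reduction to bounded $f_0$ supported in a fixed compact $K$ are all correct and essentially appear in the paper (the polar identity is the paper's Corollary~\ref{cor:main}, proved in the same way). The proposal, however, stops precisely where the real work begins, and the sketch you give for the ``bridging step'' does not close the gap.

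The obstacle you name is genuine and is not resolved by the three ingredients you invoke. Regularity of $H$ compares $\sup_{h\in H\cap U_b}\langle h,\mu\rangle$ with $\sup_{h\in H\cap C_b}\langle h,\mu\rangle$; it says nothing about a Borel $f_0$ that is not in $H$. Inner regularity of a \emph{given} $\mu$ lets you approximate $\langle f_0,\mu\rangle$ from below by $\langle u,\mu\rangle$ with usc $u\le f_0$, but this is useless for \emph{producing} the separating measure: your Hahn--Banach argument lives in $C(K)$ and simply cannot see a Borel target. Finally, $\liminf$-closedness of $H$ does not imply that some usc $u\le f_0$ lies outside $H$ (which is what a direct lift of the continuous separation would require), since a Borel function need not be a pointwise $\liminf$ of its usc minorants. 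So none of the listed tools, singly or together, upgrades a $C(K)$-separation to one that detects $f_0$.

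The paper's proof has a different architecture and uses a different engine. First, in the tight case (Proposition~\ref{thm:main.tight.liminf}) one introduces the superhedging functional $\phi(f):=\inf\{m:m+h\ge f\text{ for some }h\in H\}$ and verifies that it is a Choquet capacity: Dini's lemma on compacts gives continuity from above on $C_b$, and $\liminf$-closedness of $H$ gives continuity from below on $\mathcal{L}^0_{b-}$. The Choquet capacitability theorem (via \cite{bartl2017robust}) then yields $\phi(f)=\sup_{\mu}(\langle f,\mu\rangle-\phi^\ast_C(\mu))$ for \emph{all} bounded Borel $f$, from which the separating measure is read off directly. Second, the general $\sigma$-compact case is reduced to the tight one not by restricting to a compact $K$ but by enlarging $H$ to $H_k:=\{f:f\le h+\gamma/k\text{ for some }h\in H\}$, where $\gamma=\sum_n 1_{K_n^c}$ has compact sublevel sets; each $H_k$ is tight and regular, hence $H_k=H_k^{\circ\circ}$, and one recovers $H=\bigcap_k(H_k\cap\mathcal{L}^0_+)=H^{\circ\circ}$. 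Your compact-support reduction is a reasonable instinct and morally plays the role of the tightness reduction, but even on a compact space the passage from continuous to Borel still requires the capacitability argument, which is the missing piece in your proposal.
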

As an application we deduce the following regularity result:

\begin{corollary}
\label{cor:main}
	Under the assumptions of Theorem \ref{thm:main} one has $H^\circ=(H\cap C_b)^\circ$.
\end{corollary}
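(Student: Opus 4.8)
The inclusion $H^\circ\subseteq (H\cap C_b)^\circ$ is immediate from $H\cap C_b\subseteq H$ together with the fact that the polar reverses inclusions, so the entire content of the corollary is the reverse inclusion $(H\cap C_b)^\circ\subseteq H^\circ$. The plan is to prove this by approximating an arbitrary $f\in H$ pointwise from below by bounded upper semicontinuous functions that still lie in $H$, using only the monotonicity and regularity of $H$; convexity and $\liminf$-closedness will not be needed here.

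Concretely, I would fix $\mu\in (H\cap C_b)^\circ$; by regularity this means $\mu\in (H\cap U_b)^\circ$, i.e. $\langle h,\mu\rangle\le 1$ for every bounded upper semicontinuous $h\in H$. Fixing $f\in H$, the goal reduces to $\int f\,d\mu\le 1$. The crux is the measure-theoretic identity
\[
\int f\,d\mu=\sup\bigl\{\, \int u\,d\mu : u\in U_b,\ 0\le u\le f\,\bigr\}.
\]
Granting it, every admissible $u$ satisfies $0\le u\le f\in H$, so $u\in H$ by monotonicity and hence $u\in H\cap U_b$, whence $\int u\,d\mu=\langle u,\mu\rangle\le 1$; taking the supremum over $u$ gives $\int f\,d\mu\le 1$, i.e. $\mu\in H^\circ$. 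Note that this argument stays valid even when $f$ attains the value $+\infty$, since the supremum being bounded by $1$ forces $\int f\,d\mu\le 1$ directly.

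It therefore remains to establish the displayed identity, which rests on the regularity of finite Borel measures on a metric space (inner regularity by closed sets), and this is the only step I expect to require real work. I would first reduce to bounded $f$ via monotone convergence, $\int f\,d\mu=\sup_n\int (f\wedge n)\,d\mu$, and then approximate each bounded $f\wedge n$ from below by simple functions. For a simple function $\sum_i c_i\mathbf 1_{A_i}$ with $c_i\ge 0$ and disjoint Borel $A_i$, inner regularity supplies closed sets $F_i\subseteq A_i$ with $\mu(A_i\setminus F_i)$ arbitrarily small; since the indicator of a closed set is upper semicontinuous and a finite sum of bounded upper semicontinuous functions is again upper semicontinuous, the function $\sum_i c_i\mathbf 1_{F_i}$ lies in $U_b$, is dominated by $f$, and has integral close to that of the simple function. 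Concatenating these three approximations yields the identity. The bookkeeping—keeping the approximants nonnegative, bounded, below $f$, and upper semicontinuous while controlling their integrals—is the main obstacle; once it is in place, the corollary follows formally from the definition of regularity and the monotonicity of $H$.
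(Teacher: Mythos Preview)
Your proposal is correct and follows essentially the same route as the paper's proof: reduce to bounded $f$ via the truncations $f\wedge n$ (using monotonicity of $H$), approximate from below in integral by bounded upper semicontinuous functions via inner regularity of $\mu$, and then invoke the regularity hypothesis $(H\cap U_b)^\circ=(H\cap C_b)^\circ$. The paper phrases the argument contrapositively (start from $\mu\notin H^\circ$) and derives inner regularity from the $\sigma$-compactness of $\Omega$ via tightness, whereas you use the general fact that finite Borel measures on metric spaces are inner regular by closed sets; both justifications are valid in the present setting and lead to the same approximation step.
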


The following result is a reformulation of Theorem \ref{thm:main} on the level of functionals. 

\begin{corollary}
\label{cor:functional}
	Let $\phi\colon \mathcal{L}^0_+\to[0,+\infty]$ be a  convex increasing functional such that
	$\{ \phi\leq c \}$ is nonempty and regular for every $c\in\mathbb{R}$.
	For $\mu\in ca_+$ define 
	$\phi^\ast(\mu):=\sup_{f\in\mathcal{L}_+^0}(\langle f,\mu\rangle-\phi(f))$ (with the convention $+\infty-\infty:=-\infty$).
	Then
	\[ \phi(f)=\sup_{\mu\in ca_+} ( \langle f,\mu\rangle -\phi^\ast(\mu))\quad\text{for all }f\in\mathcal{L}_+^0 \]
	if and only if $\phi(f)\leq\liminf_n\phi(f_n)$ for every 
	sequence $(f_n)$ in $\mathcal{L}^0_+$ such that $f_n\to f$ pointwise to some $f\in\mathcal{L}^0_+$.
\end{corollary}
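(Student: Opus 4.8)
The plan is to transfer both assertions to the sublevel sets $A_c:=\{\phi\le c\}$ and then to invoke Theorem~\ref{thm:main}. Since $\phi$ is increasing and convex, each $A_c$ is monotone and convex, and by hypothesis it is nonempty and regular (for $c<\inf\phi$ it is empty and plays no role). First I would record the elementary equivalence: the stated lower-semicontinuity condition holds iff every $A_c$ is closed under $\liminf$. For the nontrivial implication, given $(h_n)\subset A_c$ I would set $g_k:=\inf_{n\ge k}h_n$, so that $g_k\uparrow\liminf_n h_n$ pointwise and monotonicity gives $\phi(g_k)\le\phi(h_k)\le c$; the lsc hypothesis then yields $\liminf_n h_n\in A_c$. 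Conversely, passing to a subsequence realizing $\liminf_n\phi(f_n)$ and applying $\liminf$-closedness of $A_c$ for every $c>\liminf_n\phi(f_n)$ recovers the lsc inequality.

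For the forward implication (the dual formula implies lsc) I would argue directly, without Theorem~\ref{thm:main}. Assuming $\phi=\phi^{**}$ and $f_n\to f$ pointwise, Fatou's lemma gives $\langle f,\mu\rangle\le\liminf_n\langle f_n,\mu\rangle$ for each fixed $\mu\in ca_+$, whence $\langle f,\mu\rangle-\phi^\ast(\mu)\le\liminf_n\bigl(\langle f_n,\mu\rangle-\phi^\ast(\mu)\bigr)\le\liminf_n\phi(f_n)$, using $\langle f_n,\mu\rangle-\phi^\ast(\mu)\le\phi(f_n)$. Taking the supremum over $\mu$ yields $\phi(f)=\phi^{**}(f)\le\liminf_n\phi(f_n)$.

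For the converse I would first note the weak duality $\phi^{**}\le\phi$, immediate from the definition of $\phi^\ast$. After subtracting the constant $\phi(0)$ (which changes $\phi^\ast$ and $\phi^{**}$ by that same constant and leaves all hypotheses intact) I may assume $\phi(0)=0$. Fix $f_0$ and $c\in(0,\phi(f_0))$; it suffices to produce $\mu$ with $\langle f_0,\mu\rangle-\phi^\ast(\mu)>c$, since letting $c\uparrow\phi(f_0)$ then gives $\phi^{**}(f_0)\ge\phi(f_0)$ (the case $\phi(f_0)=0$ is trivial, as $\phi^{**}(f_0)\ge-\phi^\ast(0)=\phi(0)$). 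By the first paragraph and Theorem~\ref{thm:main}, $A_c=A_c^{\circ\circ}$, so from $f_0\notin A_c$ I obtain $\nu\in A_c^\circ$, i.e. $S(c):=\sup_{f\in A_c}\langle f,\nu\rangle\le1$, with $\langle f_0,\nu\rangle>1$. I would then seek the correct multiple $\mu=t\nu$.

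The crux is to control $\phi^\ast(t\nu)$ across \emph{all} levels from this single separation. Here I would use the identity $\phi^\ast(\mu)=\sup_{c'\ge0}\bigl(s_{A_{c'}}(\mu)-c'\bigr)$, where $s_{A_{c'}}(\mu):=\sup_{f\in A_{c'}}\langle f,\mu\rangle$, together with the fact that $S(c'):=s_{A_{c'}}(\nu)$ is nondecreasing and concave: concavity follows from convexity of $\phi$, since $f_i\in A_{c_i'}$ forces $\tfrac12(f_1+f_2)\in A_{(c_1'+c_2')/2}$, while finiteness follows from the shrinking estimate $S(c')\le c'/c$ for $c'\ge c$ (if $\phi(f)\le c'$ then $\phi(\tfrac{c}{c'}f)\le c$, so $\langle f,\nu\rangle\le c'/c$). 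Letting $m:=S'_+(c)\in[0,\infty)$ be the right derivative at $c>0$, the supporting line of the concave $S$ gives $S(c')\le S(c)+m(c'-c)$ for all $c'\ge0$; taking $t=1/m$ then yields $\phi^\ast(t\nu)\le\sup_{c'\ge0}\bigl(tS(c)+tm(c'-c)-c'\bigr)=t\bigl(S(c)-mc\bigr)$, so that $\langle f_0,t\nu\rangle-\phi^\ast(t\nu)\ge\tfrac1m\bigl(\langle f_0,\nu\rangle-S(c)\bigr)+c>c$, because $\langle f_0,\nu\rangle>1\ge S(c)$. The degenerate case $m=0$ is easier, since then $\langle f_0,t\nu\rangle-\phi^\ast(t\nu)\ge t\bigl(\langle f_0,\nu\rangle-S(c)\bigr)\to+\infty$.

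I expect this last step to be the main obstacle. The sublevel sets are monotone and hence accessible to Theorem~\ref{thm:main}, but the epigraph of $\phi$ is not monotone, so one cannot separate $f_0$ from the epigraph directly and is forced to manufacture a global affine minorant of $\phi$ out of a single separation from one sublevel set. What makes this work is the concavity of $c'\mapsto s_{A_{c'}}(\nu)$ and the calibrated scale $t=1/S'_+(c)$, which simultaneously keeps $\phi^\ast(t\nu)$ finite and pushes the value at $f_0$ strictly above $c$.
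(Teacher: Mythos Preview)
Your proof is correct, and the route you take for the nontrivial implication is genuinely different from the paper's. Both arguments start by applying Theorem~\ref{thm:main} to the sublevel sets $A_c=\{\phi\le c\}$, but from there the paper passes to the bounded extension $\hat\phi(f):=\phi(f\vee 0)$ on $\mathcal{L}^\infty$, observes that $A_c=A_c^{\circ\circ}$ makes $\hat\phi$ lower semicontinuous for $\sigma(\mathcal{L}^\infty,ca)$, and then invokes the Fenchel--Moreau theorem on the dual pair $(\mathcal{L}^\infty,ca)$ to obtain the representation on bounded functions, finally extending to $\mathcal{L}^0_+$ by continuity from below. You, by contrast, avoid the locally convex machinery entirely: from a single separation $\nu\in A_c^\circ$ with $\langle f_0,\nu\rangle>1$ you manufacture a global affine minorant of $\phi$ by exploiting the concavity of $c'\mapsto s_{A_{c'}}(\nu)$ and choosing the scale $t=1/S'_+(c)$. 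This is more elementary in that it does not appeal to Fenchel--Moreau, and it makes transparent exactly how the convexity of $\phi$ (through the concavity of $S$) converts a level-set separation into a slope; the paper's approach, on the other hand, is shorter to write and situates the result within standard convex duality. The easy direction and the lsc/$\liminf$-closedness equivalence you record are handled the same way in both, via Fatou's lemma.
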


The proof of Theorem \ref{thm:main} is divided into two steps. We first present a bipolar theorem under an additional tightness assumption for  $\liminf$-closed convex sets $G$ of measurable functions that are bounded from below. In that case, it follows from the Choquet capacitability theorem (see e.g.~\cite{bartl2017robust,choquet1959forme}) 
that the ``superhegding'' functional \[\phi(f):=\inf\{ m\in\mathbb{R} : m+g\geq f \text{ for some }g\in G\}\] has a dual representation of the form $\phi(f)=\sup_{\mu\in ca_+}(\langle f,\mu\rangle-\phi^\ast(\mu))$, and the level sets $\{\phi^\ast\leq c\}$ are tight for all $c\in\mathbb{R}$, where $\phi^\ast$ denotes the convex conjugate. That $G$ is equal to 
its bipolar $G^{\circ\circ}$ then follows from the representation of the superhedging functional. In a second step, 
we approximate $H\subset \mathcal{L}^0_+$ by a sequence of $\liminf$-closed convex sets $(H_k)$ which satisfy the tightness assumption and thus by the first step
have the bipolar representation $H_k=H_k^{\circ\circ}$. 
The $\liminf$-closedness is then used to show that 
$H=\bigcap_k H_k=(\bigcup_k H^\circ_k)^\circ=H^{\circ\circ}$. 

In particular, Theorem \ref{thm:main} implies that $H\cap \mathcal{L}^\infty$ is $\sigma(\mathcal{L}^\infty,ca)$-closed, where $\mathcal{L}^\infty$ is the set of all bounded Borel measurable functions $f:\Omega\to\mathbb{R}$. In case that functions in $\mathcal{L}^\infty$ are identified if they are equal almost surely  w.r.t.~a reference measure, it follows from the Krein-Smulian theorem that a convex set in $\mathcal{L}^\infty$ is weak$^\ast$-closed if it is Fatou closed, i.e.~closed under bounded almost surely convergent sequences. If the dominating measure is replaced by a capacity, a similar result is shown in \cite{maggis2016fatou}, however under the assumption that the capacity allows for an essential infimum.

Finally, we give two applications of the pointwise bipolar theorem. The first one is a transport duality with non-tight marginals.
In the classical transport problem one optimizes $\langle f, \mu\rangle$ for a given function $f\in\mathcal{L}^0_+$ over the set of all measures
$\mu$ with prescribed marginals. Motivated by the hedging problem in mathematical finance, we consider the modified version
where $\langle f,\mu\rangle$ is optimized over all measures where the marginals are in given non-tight sets $H^\circ_i$.
By means of Theorem \ref{thm:main} we identify the modified transport problem with a corresponding superhedging functional. As a second application, we consider the problem of pointwise superreplicating a path-dependent contingent claim $f$ by investing dynamically and 
statically at the terminal time, i.e.~minimizing the hedging costs $\varphi(g)$ over the trading strategies $(\vartheta,g)$ such that 
$f(S_1,\dots,S_T)\leq (\vartheta\cdot S)_T+g(S_T)$.
Here $(\vartheta\cdot S)_T$ denotes the discrete time stochastic integral and 
$\varphi$ is a (sublinear) pricing functional for the plain vanilla option $g(S_T)$.
The bipolar theorem is then used to show the superhedging duality.
This is a classical problem in mathematical finance and was investigated e.g.~in
\cite{acciaio2017space,dolinsky2014martingale,galichon2014stochastic,hobson2011skorokhod}
though in a different setting, i.e.~either in continuous time or 
under the assumption that a reference measure exists.

The paper is organized as follows. In Section \ref{sec2} and Section \ref{sec:proof} we state and prove our main results. 
Their applications to the transport problem and the robust hedging problem are given in 
Section \ref{sec:app1} and Section \ref{sec:app2}, respectively.

\section{A bipolar theorem for $\liminf$-closed sets}\label{sec2}

Let $\Omega$ be a metric space. Denote by $\mathcal{L}_{b-}^0$ the space of all Borel measurable functions
$f\colon\Omega\to\mathbb{R}\cup\{+\infty\}$ which are bounded from below.
Let $ca_+$ be the set of all finite positive Borel measures on $\Omega$, including the subset $ca_+^1$ of all probability measures.
Define
$\langle f,\mu\rangle:=\int f\,d\mu$
for all $f\in \mathcal{L}_{b-}^0$ and $\mu\in ca_+$. The polar and bipolar sets of 
$H\subset \mathcal{L}_{b-}^0$ are given by
\[H^\circ:=\left\{\mu\in ca_+: \langle f,\mu\rangle\le 1\mbox{ for all }f\in H\right\}\]
and 
\[H^{\circ\circ}:=\left\{f \in \mathcal{L}_{b-}^0: \langle f,\mu\rangle\le 1\mbox{ for all }\mu\in H^\circ\right\}.\]
Let $C_b$ and $U_b$ be the sets of all bounded functions $f:\Omega\to\mathbb{R}$ that are continuous and upper semicontinuous, respectively. 

\begin{definition}
	We say that a subset $H$ of $\mathcal{L}_{b-}^0$ is
	\begin{enumerate}[ $\bullet$]
    \item monotone, if  $f\in H$ for all $f\in \mathcal{L}_{b-}^0$ such that $f\leq h$ for some $h\in H$,
	\item nontrivial, if $H\neq\emptyset$ and $H\neq \mathcal{L}_{b-}^0$,
	\item normalized, if $0\in H$ and $\varepsilon\notin H$ for every $\varepsilon>0$,
	\item tight, if for every $m\in\mathbb{R}$ with $m\in H$, $n\in\mathbb{N}$ and $\varepsilon>0$,
	there exists a compact set $K\subset \Omega$ such that $m-\varepsilon + n1_{K^c}\in H$,
	\item closed under $\liminf$, if 
	$\liminf_ n h_n\in H$ for every sequence $(h_n)$ in $H$ with $h_n\geq c$ for some $c\in\mathbb{R}$,
	\item regular, if $\sup_{h\in H\cap U_b}\langle h,\mu\rangle=\sup_{h\in H\cap C_b}\langle h,\mu\rangle$ for all $\mu\in ca_+^1$.
	\end{enumerate}
\end{definition}

For a normalized and monotone set $H\subset\mathcal{L}_{b-}^0$ it suffices
to restrict to $m=0$ in the definition of tightness. 
Further, if $\Omega$ is compact, every monotone set $H\subset\mathcal{L}_{b-}^0$ is automatically tight.

\begin{lemma}
\label{lem:centered}
	Suppose that $H$ is monotone and closed under $\liminf$.
	Then $H$ is nontrivial if and only if $H-m$ is normalized for a unique $m\in\mathbb{R}$.
\end{lemma}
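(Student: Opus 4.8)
The plan is to reduce everything to the set $M := \{c \in \mathbb{R} : c \in H\}$ of real constants lying in $H$ (identifying a real number with the corresponding constant function). Translating $H$ by $m$ translates $M$ by $m$, and I claim that $H - m$ being normalized is exactly the statement $m = \max M$: the condition $0 \in H - m$ says $m \in M$, while $\varepsilon \notin H - m$ for every $\varepsilon > 0$ says that no constant strictly above $m$ belongs to $H$. So the whole lemma amounts to showing that, under nontriviality, $M$ admits a (necessarily unique) maximum, and conversely.

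The converse is immediate: if $H - m$ is normalized, then $m \in H$ gives $H \neq \emptyset$, and $m + \varepsilon \notin H$ for all $\varepsilon > 0$ gives $H \neq \mathcal{L}_{b-}^0$, so $H$ is nontrivial (uniqueness is not needed here). For the forward direction I would first record two soft consequences of monotonicity. First, $M \neq \emptyset$: any $h \in H$ is bounded below by some $c \in \mathbb{R}$, and $c \le h$ gives $c \in H$ by monotonicity. Second, $M$ is downward closed, hence an interval unbounded below, so $m := \sup M$ is well defined in $(-\infty, +\infty]$.

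The crux is to show $m < \infty$ and $m \in M$, and both use closedness under $\liminf$. For $m < \infty$, suppose instead that every real constant lies in $H$; then for arbitrary $f \in \mathcal{L}_{b-}^0$ with $f \ge c_0$, the truncations $f_n := f \wedge n$ satisfy $f_n \le n \in H$, hence $f_n \in H$ by monotonicity, are bounded below by $c_0 \wedge 1$, and increase pointwise to $f$, so $f = \liminf_n f_n \in H$; this forces $H = \mathcal{L}_{b-}^0$, contradicting nontriviality. For $m \in M$, choose constants $c_n \in M$ with $c_n \uparrow m$; as a sequence in $H$ bounded below by $c_1$ with $\liminf_n c_n = m$, closedness under $\liminf$ gives $m \in H$. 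Hence $m = \max M$, i.e.\ $H - m$ is normalized, and uniqueness is clear since any normalizing $m'$ must coincide with $\max M$.

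The only real (and mild) obstacle is recognizing the double role of $\liminf$-closedness: it both excludes the degenerate case $M = \mathbb{R}$ — where the limiting truncation legitimately attains the value $+\infty$ yet stays inside $\mathcal{L}_{b-}^0$ — and upgrades the supremum of $M$ to a maximum. The remaining steps are routine bookkeeping with monotonicity and the lower boundedness built into $\mathcal{L}_{b-}^0$.
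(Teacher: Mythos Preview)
Your proof is correct and follows essentially the same approach as the paper: both arguments reduce to showing that the set $M$ of real constants in $H$ has a maximum, which then uniquely normalizes $H$. Your treatment of the step $m<\infty$ is in fact slightly more careful than the paper's, which attributes boundedness of $M$ to monotonicity alone, whereas you correctly invoke $\liminf$-closedness to handle unbounded $f\in\mathcal{L}^0_{b-}$.
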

\begin{proof}
	Obviously, if $H-m$ is normalized for some $m$, then $H$ is nontrivial.
	Conversely, if $H$ is nontrivial, define
	\[ M:=\{m\in\mathbb{R} : h\geq m \text{ for some } h\in H \}.\]
	Then $M\neq \emptyset$ and monotonicity of $H$ implies that $M$ is bounded from above, i.e. $m:=\sup M\in\mathbb{R}$.
	Let $(m_n)$ in $M$ such that $m_n\uparrow m$.
	By definition, there exists $(h_n)$ in $H$ such that $h_n\geq m_n$. 
	Since $h_n\geq m_1$ and $H$ is closed under $\mathop{\rm lim\,inf}$, 
	one has $m\leq \liminf_n h_n\in H$. Hence $m\in H$, which shows that $H-m$ is normalized.
\end{proof}

\begin{remark}
\label{rem:fatou.continuous.below}
	Every monotone subset $H$ of $\mathcal{L}_{b-}^0$ is closed under $\liminf$ if and only if
	$\sup_n h_n\in H$ for every increasing sequence $(h_n)$ in $H$. 
	Indeed, if $(h_n)$ is a sequence in $H$ which is bounded from below by a constant, then
	$\liminf h_n=\sup_n g_n$ for $g_n:=\inf_{m\geq n} h_m$ which is an element of $H$ by monotonicity.
\end{remark}

\begin{proposition}
	\label{thm:main.tight.liminf}
	Let $H$ be a monotone normalized regular tight subset of $\mathcal{L}_{b-}^0$. 
	Then,  $H=H^{\circ\circ}$ if and only if $H$ is convex and closed under $\liminf$.	
\end{proposition}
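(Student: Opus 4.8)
The proof splits into the two implications, and the substance lies in the converse.

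Suppose first that $H=H^{\circ\circ}$. Then $H$ is convex, being the intersection of the half-spaces $\{f:\langle f,\mu\rangle\le 1\}$ over $\mu\in H^\circ$, each of which is convex since $\langle\cdot,\mu\rangle$ is affine on $\mathcal{L}_{b-}^0$. It is also closed under $\liminf$: for $(h_n)$ in $H$ with $h_n\ge c$ and $\mu\in H^\circ$, Fatou's lemma applied to $h_n-c\ge 0$ gives $\langle\liminf_n h_n,\mu\rangle\le\liminf_n\langle h_n,\mu\rangle\le 1$, so $\liminf_n h_n\in H^{\circ\circ}=H$. For the converse, assume $H$ is convex and closed under $\liminf$, and introduce the superhedging functional
\[ \phi(f):=\inf\{m\in\mathbb{R}:m+g\ge f\text{ for some }g\in H\}. \]
From convexity, monotonicity and normalization of $H$ one reads off that $\phi$ is convex, increasing, satisfies $\phi(f+c)=\phi(f)+c$ for $c\in\mathbb{R}$, and $\phi(0)=0$. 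The first step is the identity $H=\{\phi\le 0\}$: the inclusion ``$\subseteq$'' is clear, and if $\phi(f)\le 0$ then $f-1/n\le g_n$ for some $g_n\in H$, hence $f-1/n\in H$ by monotonicity, and since $f-1/n\uparrow f$ with a common lower bound, Remark \ref{rem:fatou.continuous.below} yields $f\in H$.

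The crux is the dual representation
\[ \phi(f)=\sup_{\mu\in ca_+}\bigl(\langle f,\mu\rangle-\phi^\ast(\mu)\bigr),\qquad f\in\mathcal{L}_{b-}^0, \]
with $\phi^\ast(\mu):=\sup_f(\langle f,\mu\rangle-\phi(f))$. The plan is to establish it first on bounded functions and then to propagate it to all bounded-below Borel functions. On $C_b$, paired with $ca$, the restriction of $\phi$ is convex, increasing and translation-equivariant, so the Fenchel--Moreau theorem yields the representation for $f\in C_b$, with $\phi^\ast$ finite only on $ca_+^1$ by translation equivariance. Tightness of $H$ is then used to show that the level sets $\{\phi^\ast\le c\}$ are tight, hence relatively compact sets of Radon measures, which turns the suprema into maxima; combined with regularity, which identifies the data coming from $C_b$ and from $U_b$, this extends the representation to $f\in U_b$. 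Finally, Choquet's capacitability theorem (see \cite{bartl2017robust,choquet1959forme}) is invoked to pass from upper semicontinuous to arbitrary functions in $\mathcal{L}_{b-}^0$. I expect this capacitability step to be the main obstacle: it requires checking that $\phi$ behaves as a capacity (continuity from above along decreasing compacts and continuity from below), that tightness survives the passage to the conjugate, and that the possibly $+\infty$-valued functions are handled correctly.

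Granting the representation, the bipolar identity follows. Translation equivariance forces $\phi^\ast\equiv+\infty$ off $ca_+^1$, and for $\mu\in ca_+^1$ one checks from $H=\{\phi\le 0\}$ and monotonicity that $\phi^\ast(\mu)=\sup_{g\in H}\langle g,\mu\rangle$; thus $\mu\in H^\circ$ precisely when $\phi^\ast(\mu)\le 1$. Since $H\subseteq H^{\circ\circ}$ always, it remains to prove $H^{\circ\circ}\subseteq H$. Let $f\in H^{\circ\circ}$; by the representation and $\phi^\ast\equiv+\infty$ off $ca_+^1$, it suffices to show $\langle f,\mu\rangle\le\phi^\ast(\mu)$ for every $\mu\in ca_+^1$. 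This is clear when $\phi^\ast(\mu)=+\infty$; when $0<\phi^\ast(\mu)<\infty$ the measure $\mu/\phi^\ast(\mu)$ lies in $H^\circ$, so $\langle f,\mu/\phi^\ast(\mu)\rangle\le 1$, i.e.\ $\langle f,\mu\rangle\le\phi^\ast(\mu)$; and when $\phi^\ast(\mu)=0$ one has $t\mu\in H^\circ$ for every $t>0$, whence $t\langle f,\mu\rangle\le 1$ for all $t$ and $\langle f,\mu\rangle\le 0$. In every case $\langle f,\mu\rangle-\phi^\ast(\mu)\le 0$, so $\phi(f)\le 0$ and $f\in\{\phi\le 0\}=H$, which completes the argument.
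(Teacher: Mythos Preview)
Your overall strategy matches the paper's: introduce the superhedging functional $\phi$, identify $H=\{\phi\le 0\}$, establish the dual representation of $\phi$ via Choquet capacitability, and conclude by a scaling argument. Your final paragraph is correct and is essentially the contrapositive of the paper's separation step.

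The gap is in how you obtain the representation on $C_b$. You invoke Fenchel--Moreau on the pair $(C_b,ca)$, but this requires $\sigma(C_b,ca)$-lower semicontinuity of $\phi$, which you have not verified; and for a noncompact metric space the norm dual of $C_b$ is strictly larger than $ca$, so a naive Fenchel--Moreau would first produce finitely additive functionals that then have to be shown $\sigma$-additive. The paper does not go this route. Instead, tightness of $H$ is used, via Dini's lemma, to prove that $\phi$ is continuous from above along sequences in $C_b$ decreasing to $0$: given such $(f_n)$ and $\varepsilon>0$, tightness supplies a compact $K$ with $\|f_1\|_\infty 1_{K^c}-\varepsilon\in H$, and Dini forces $f_n\le\varepsilon$ on $K$ eventually, whence $\phi(f_n)\le 2\varepsilon$. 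This downward continuity, together with continuity from below on $\mathcal{L}^0_{b-}$ (proved directly from $\liminf$-closedness of $H$) and the identity $\phi^\ast_C=\phi^\ast_U$ coming from regularity, are exactly the hypotheses of the capacitability results cited from \cite{bartl2017robust}, which then deliver the representation on $\mathcal{L}^\infty$ in one stroke; the extension to $\mathcal{L}^0_{b-}$ follows by truncation and upward continuity. So tightness enters not, as you suggest, to compactify the level sets of $\phi^\ast$ after an $C_b$-representation is already in hand, but upstream, to furnish the downward continuity that makes $\phi$ a Choquet capacity in the first place. Once you replace the Fenchel--Moreau invocation by this Dini argument and carry out the upward-continuity verification you flagged, your sketch becomes the paper's proof.
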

\begin{proof}
	If $H=H^{\circ\circ}$ then $H$ is convex and closed under $\liminf$ by Fatou's lemma.
	Conversely, suppose $H$ is convex and closed under $\liminf$. Define 
	\[ \phi(f):=\inf\{ m\in\mathbb{R} : m+h\geq f \text{ for some }h\in H\} \]
    for all $f\in \mathcal{L}_{b-}^0$. Then, one has $\phi(f+m)=\phi(f)+m$ for every $f\in \mathcal{L}_{b-}^0$ 
    and $m\in\mathbb{R}$, and $\phi(0)=0$ by normalization of $H$. 
	Moreover, $\phi$ is increasing and convex, by monotonicity and convexity of $H$.
    Our goal is to apply the Choquet capacitability theorem in the form of \cite[Section 2]{bartl2017robust},
    which requires ``downwards continuity '' of $\phi$ on the lattice of continuous bounded functions,
    and ``upwards continuity'' on $\mathcal{L}^0_+$.
    So let $(f_n)$ be a sequence in $C_b$ which decreases pointwise to 0.
	By tightness of $H$, for every $\varepsilon>0$ there exists
	a compact $K\subset\Omega$ such that $\|f_1\|_\infty 1_{K^c}-\varepsilon\in H$,
	where $\|\cdot\|_\infty$ denotes the supremum norm.
	It follows from Dini's lemma that $f_n1_K\leq\varepsilon$ for $n$ large enough, so that 
	$f_n\leq 2\varepsilon+\|f_1\|_\infty 1_{K^c}-\varepsilon$, and therefore $\phi(f_n)\leq2\varepsilon$.
	As $\varepsilon>0$ was arbitrary it follows that $\lim_n \phi(f_n)=0$. 
	Define
	\[ \phi^\ast_C(\mu):=\sup_{f\in C_b} \big( \langle f,\mu\rangle -\phi(f)\big)
	\quad\text{and}\quad
	\phi^\ast_U(\mu):=\sup_{f\in U_b} \big( \langle f,\mu\rangle -\phi(f)\big)\]
	for all $\mu\in ca_+$.
	Observe that $\phi^\ast_C(\mu)=\phi^\ast_U(\mu)=+\infty$ whenever $\mu$ is not a probability
	measure, because $\phi(m)=m$ for each $m\in\mathbb{R}$.
	Further, one has
	\[ \phi^\ast_C(\mu)=\sup_{f\in H\cap C_b} \langle f,\mu\rangle\]
    for every $\mu\in ca_+^1$. Indeed, by definition the left hand side is larger then the right hand side.
	To show the other inequality, fix $f\in C_b$ and $\varepsilon>0$. 
	By definition of $\phi$ there exists $h\in H$ such that 
	$\phi(f)+\varepsilon+h\geq f$. For $f':=f-\phi(f)-\varepsilon$, one has $f'\in H\cap C_b$ because $f'\le h$, and 
	\[ \langle f',\mu\rangle 
	= \langle f,\mu\rangle -\phi(f)-\varepsilon.\]
	As $\varepsilon>0$ was arbitrary, the statement holds. Similarly, it follows that 
	$\phi^\ast_U(\mu)=\sup_{f\in H\cap U_b} \langle f,\mu\rangle$, so that by regularity 
	\[ \phi^\ast_C(\mu)
	=\sup_{f\in H\cap C_b}\langle f,\mu \rangle
	=\sup_{f\in H\cap U_b}\langle f,\mu \rangle
	=\phi^\ast_U(\mu). \]
	We next show that $\phi$ is continuous from below on $\mathcal{L}_{b-}^0$. Let $(f_n)$ be a sequence 
	in $\mathcal{L}_{b-}^0$ which increases pointwise to $f\in \mathcal{L}_{b-}^0$.
	Since $\phi$ is increasing, one has
	$\phi(f)\geq\lim_n \phi(f_n)$.
	As for the other inequality, we assume that $\lim_n \phi(f_n)<\infty$ since otherwise the statement is obvious.
	For each $n$, fix $m_n\in\mathbb{R}$ and $h_n\in H$ such that
	\[ m_n\leq\phi(f_n)+1/n
	\quad \text{and}\quad
	m_n+h_n \geq f_n.\]
	Note that the sequence $(m_n)$ has a limit.
	Since $h_n\geq f_n-m_n\ge c$ for some $c\in\mathbb{R}$ and $H$ is closed under 
	$\liminf$, it follows that $h:=\liminf_n h_n\in H$.
	Hence
	\[\lim_n m_n + h 
	=\liminf_n (m_n+h_n)
	\geq\liminf_n f_n
	=f \]
	which shows that $\phi(f)\leq\lim_n\phi(f_n)$.
	Moreover, we obtain $\phi(f)\leq 0$ if and only if $f\leq h$ for some 
	$h\in H$ by applying this argument to the constant sequence $f_n:=f$ 
	for all $n\in\mathbb{N}$ which, by monotonicity, shows that $H=\{f\in \mathcal{L}^0_{b-}: \phi(f)\le 0\}$.
	Now Theorem 2.2 and Proposition 2.3 in \cite{bartl2017robust} yield 
	\begin{equation}
	\label{rep}
	\phi(f)=\sup_{\mu\in ca_+^1} \big( \langle f,\mu\rangle -\phi^\ast_C(\mu)\big)
	\end{equation}
	for all bounded $f\in\mathcal{L}^\infty:=\{f\colon\Omega\to\mathbb{R}: f\text{ is Borel and bounded}\}$.
	Note that, as stated in the beginning of Section 2 in \cite{bartl2017robust}, 
	both results are valid under the usual ZFC-axioms and do not require Martins axiom.
	For arbitrary $f\in\mathcal{L}_{b-}^0$ consider $f\wedge n$ which increases pointwise to $f$.
	Since \eqref{rep} holds for every $n$, and $\phi$ as well as $\langle\cdot,\mu\rangle$ are continuous from below,
	\eqref{rep} extends to $f\in\mathcal{L}_{b-}^0$ (with the convention $+\infty-\infty:=-\infty$ on the right hand side).
	
	Finally we show that $H=H^{\circ\circ}$. Obviously, $H\subset H^{\circ\circ}$. As for the other inclusion, fix $f\not\in H$.
	Since $\phi(f)>0$ it follows from \eqref{rep} that there exists $\mu\in ca_+^1$ such that
	\begin{equation}\label{sep}
	\langle f,\mu\rangle > \phi^*_C(\mu)\ge 0.
	\end{equation}
	Further, since $\phi(h)\leq 0$ for every $h\in H$, it follows again from \eqref{rep} 
	that $\phi^\ast_C(\mu)\geq \langle h,\mu\rangle$ for every $h \in H$.
	Hence, by scaling \eqref{sep} there exists $\mu'\in ca_+$ such that
	\[\langle f,\mu'\rangle>1\ge \langle h,\mu'\rangle\quad\mbox{for all } h\in H.\]
	This shows that $\mu'\in H^\circ$, and therefore $f\not\in H^{\circ\circ}$.
\end{proof}

\begin{corollary}\label{cor:verschieben}
	For every monotone convex regular tight set $H\subset\mathcal{L}_{b-}^0$ which is closed under $\liminf$ and $0\in H$,
	one has $H=H^{\circ\circ}$.
\end{corollary}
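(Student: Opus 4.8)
The plan is to reduce to Proposition \ref{thm:main.tight.liminf} by recentering $H$ so that it becomes normalized, and then to transfer the bipolar identity from the recentered set back to $H$ by a scaling argument on the polars. First I would dispose of the degenerate case: if $H=\mathcal{L}_{b-}^0$, then testing against large constants $f=c$ shows that $H^\circ=\{0\}$ (the null measure), whence $H^{\circ\circ}=\mathcal{L}_{b-}^0=H$ directly. So from now on assume $H$ is nontrivial. Since $0\in H$, the set $H$ is nonempty, and Lemma \ref{lem:centered} provides a unique $m\in\mathbb{R}$ such that $G:=H-m$ is normalized.

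Because $0\in H$ forces $m\ge 0$ (otherwise $0=m+\varepsilon\notin H$ for $\varepsilon=-m>0$, contradicting $0\in H$), and because translation preserves monotonicity, convexity and $\liminf$-closedness, the only points that need checking are that $G$ is again regular and tight. Regularity transfers since, for a probability measure $\mu$, the suprema over $G\cap C_b$ and $G\cap U_b$ differ from those over $H\cap C_b$ and $H\cap U_b$ by the same constant $m$; and tightness of $G$ at the constant $0$ is exactly tightness of $H$ at the constant $m\in H$. Thus Proposition \ref{thm:main.tight.liminf} applies to $G$ and yields $G=G^{\circ\circ}$.

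The inclusion $H\subset H^{\circ\circ}$ is automatic, so it remains to prove $H^{\circ\circ}\subset H$, equivalently $f-m\in G^{\circ\circ}=G$ for every $f\in H^{\circ\circ}$. Here is where the main work lies, since the polar operation is not translation invariant and $G^\circ$ and $H^\circ$ genuinely differ. The key observation is the scaling relation between them: if $\nu\in G^\circ$, then $\langle h-m,\nu\rangle\le 1$ for all $h\in H$, i.e.\ $\langle h,\nu\rangle\le c$ for all $h\in H$ with $c:=1+m\,\nu(\Omega)\ge 1$, so that $\nu/c\in H^\circ$. I would then feed this into the defining property of $f\in H^{\circ\circ}$: applying $\langle f,\cdot\rangle\le 1$ to the admissible measure $\nu/c$ gives $\langle f,\nu\rangle\le c$ (in particular the integral is finite), and subtracting $m\,\nu(\Omega)$ yields $\langle f-m,\nu\rangle\le 1$.

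Since $\nu\in G^\circ$ was arbitrary, this shows $f-m\in G^{\circ\circ}$, hence $f-m\in G$, and therefore $f\in H$; combined with the trivial inclusion we obtain $H=H^{\circ\circ}$. The only genuine obstacle is the transfer of the bipolar identity across the translation: once the rescaling $\nu\mapsto\nu/(1+m\,\nu(\Omega))$ is identified as the correct bridge between $G^\circ$ and $H^\circ$, everything reduces to bookkeeping, and the positivity $m\ge 0$ guarantees $c\ge 1>0$ so that the rescaling is legitimate.
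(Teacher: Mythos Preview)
Your argument is correct and follows the same overall scheme as the paper: discard the trivial case $H=\mathcal{L}_{b-}^0$, recentre via Lemma~\ref{lem:centered} to a normalized set, invoke Proposition~\ref{thm:main.tight.liminf} on the recentred set, and then transfer the conclusion back to $H$ using $0\in H$ (equivalently $m\ge 0$). The difference lies in how the transfer is carried out. The paper does not use the bipolar identity $G=G^{\circ\circ}$ as a black box; instead it reaches into the proof of Proposition~\ref{thm:main.tight.liminf} to extract the support-function representation $f\in\tilde H\Leftrightarrow\langle f,\mu\rangle\le\alpha_{\tilde H}(\mu)$ for all $\mu\in ca_+^1$, translates this by $m$ to obtain the analogous representation for $H$, and then re-runs the final scaling step of that proof (which needs $\alpha_H(\mu)\ge 0$, guaranteed by $0\in H$). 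You instead use only the statement of Proposition~\ref{thm:main.tight.liminf} and identify the explicit map $\nu\mapsto\nu/(1+m\,\nu(\Omega))$ from $G^\circ$ into $H^\circ$, which lets you pull the inequality $\langle f,\cdot\rangle\le 1$ from $H^\circ$ back to $G^\circ$. Your route is a little cleaner in that it treats the proposition as a black box; the paper's route keeps the role of the support function and of probability measures more visible.
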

\begin{proof}
	If $H=\mathcal{L}^0_{b-}$ the statement obviously holds. Otherwise, $H$ is nontrivial and 
	by Lemma \ref{lem:centered} there exists $m\in\mathbb{R}$ such that $\tilde H:=H-m$ is centered. 
	Following the arguments in the proof of Proposition \ref{thm:main.tight.liminf}, 
	it follows that $f\in \tilde H$ if and only if $\langle f,\mu\rangle\le \alpha_{\tilde H}(\mu)$ 
	for all $\mu\in ca_+^1$, where $\alpha_{\tilde H}(\mu)=\sup_{f\in \tilde H\cap C_b}\langle f,\mu\rangle$ 
	is the support function of $\tilde H\cap C_b$. 
	Hence, $f\in H$ if and only if 
	$f-m\in\tilde H$ if and only if $\langle f,\mu\rangle\le \alpha_{\tilde H}(\mu)+m=\alpha_{H}(\mu)$ for all $\mu\in ca_+^1$.
	Since $0\in H$ we can apply the scaling argument in the end of the proof of Proposition \ref{thm:main.tight.liminf}, which implies
	$H=H^{\circ\circ}$.
\end{proof}

\section{The proof of Theorem \ref{thm:main}}\label{sec:proof}

Throughout this section we assume that $\Omega$ is a $\sigma$-compact metric space, that is, 
there exists a sequence $(K_n)$ of compact subsets of $\Omega$ such that $\Omega=\bigcup_n K_n$. 
For $H\subset \mathcal{L}^0_+$ the bipolar 
$H^{\circ\circ}=\left\{f \in \mathcal{L}_{+}^0: \langle f,\mu\rangle\leq 1\mbox{ for all }\mu\in H^\circ\right\}$ 
is a subset of $\mathcal{L}^0_+$, while for $G\subset \mathcal{L}^0_{b-}$ the bipolar 
$G^{\circ\circ}=\left\{f \in \mathcal{L}_{b-}^0: \langle f,\mu\rangle\leq 1\mbox{ for all }\mu\in G^\circ\right\}$ 
is a subset of $\mathcal{L}^0_{b-}$. 
Recall that $H^\circ=\left\{\mu \in ca_+: \langle f,\mu\rangle\le 1\mbox{ for all }f \in H\right\}$.
In the following we provide the proof of the main result.

\begin{proof}[Proof of  Theorem \ref{thm:main}]
	If $H=H^{\circ\circ}$ then $H$ is convex and closed under $\liminf$ by Fatou's lemma.
	As for the other implication we can assume that $H\neq \mathcal{L}^0_+$
	because otherwise $H=H^{\circ\circ}$ obviously holds. 
	Let $(K_n)$ be an increasing sequence of compact subsets of $\Omega$ such that 
	$\Omega=\bigcup_n K_n$, and define the function $\gamma\colon\Omega\to[0,+\infty)$
	by $\gamma:=\sum_n 1_{K_n^c}$. Then, for every $c\in\mathbb{R}_+$,
	the level set $\{\gamma\leq c\}$ is compact.
	We claim that $H_k$ is nontrivial for $k$ large enough, 
	where $H_k$ is given by 
	\[H_k:=\{ f \in \mathcal{L}_{b-}^0 : f\leq h +\gamma/k\text{ for some } h\in H\}\]
	for all $k\in\mathbb{N}$.
	Indeed, if $H_k$ is trivial for every $k$, then there exists $h_k\in H$ such that
	$k\leq h_k+\gamma/k$ for all $k$. 
	However, since $H$ is closed under $\liminf$, 
	this implies that $h=\liminf_k h_k= +\infty\in H$, in contradiction to  $H\neq \mathcal{L}^0_+$.
    Further, $H_k$ is closed under $\liminf$ for each $k$, since for every sequence $(f_n)$ in $H_k$ with 
    $c\le f_n\le h_n+\gamma/k$ for $h_n\in H$ and $c\in\mathbb{R}$, one has $\liminf_n f_n\le h+\gamma/k$ for $h=\liminf_n h_n\in H$. 
    By Lemma \ref{lem:centered} it follows that $H_k-m_k$ is normalized for a unique $m_k\in\mathbb{R}$.
	In particular, there exists $h_k\in H$ such that $m_k\leq h_k+\gamma/k$.
	For $n\in\mathbb{N}$ define the compact set
	$K:=\{\gamma\leq k(m_k+n)\}$. Then 
	\[m_k+n1_{K^c} \leq h_k+\gamma/k\]
	so that $n1_{K^c}\in H_k-m_k$, that is, $H_k-m_k$ is tight. 
	Since $\gamma$ is lower semicontinuous, there exists a sequence
	$(\gamma_n)$ of continuous bounded functions such that
	$0\leq\gamma_n\uparrow\gamma/k$. For every $\mu\in ca_+^1$ one has
	\begin{align}
	\label{eq:support.non.tight}
		&\sup_{f\in H_k\cap C_b} \langle f,\mu\rangle
		\geq \sup_{f\in H\cap C_b}\sup_{n\in\mathbb{N}} \langle f+\gamma_n,\mu\rangle
		=\sup_{f\in H\cap C_b} \langle f,\mu\rangle + \langle \gamma/k,\mu\rangle\\
		&=\sup_{f\in H\cap U_b} \langle f,\mu\rangle+ \langle \gamma/k,\mu\rangle
		\geq \sup_{f \in H_k\cap U_b}\langle f,\mu\rangle
	\nonumber
	\end{align}
	where we have to justify the last inequality. To that end, fix $f\in H_k\cap U_b$ so that $f\leq h+\gamma/k$ for some $h\in H$.
	Then $0\vee(f-\gamma/k)\leq h$, so that  $0\vee(f-\gamma/k)\in H\cap U_b$ by monotonicity of $H$, 
	which implies that $\langle f,\mu\rangle \leq \langle 0\vee(f-\gamma/k),\mu\rangle + \langle \gamma/k,\mu\rangle$.
	Since $H_k\cap C_b\subset H_k\cap U_b$, it follows from \eqref{eq:support.non.tight} that $H_k$ is regular. 

	In summary, $H_k-m_k$ is a monotone convex normalized regular tight subset of $\mathcal{L}^0_{b-}$. 
	By Corollary \ref{cor:verschieben} one has $H_k=H_k^{\circ\circ}$. Since $\gamma$ is positive, it follows that $H\subset H_k\cap\mathcal{L}_+^0$ for every $k$.
	On the other hand, if $f\in H_k\cap \mathcal{L}_+^0$ for every $k$, then there exists a sequence $(h_k)$ in $H$ such that $f\leq h_k+\gamma/k$.
	But then $f\leq h:=\liminf_k h_k$, and since $H$ is monotone and closed under 
	$\liminf$, it follows that $f\in H$. Thus, one has
	\[	H=\bigcap_k \left(H_k\cap \mathcal{L}_+^0\right)
	=\bigcap_k \left(H_k^{\circ\circ} \cap \mathcal{L}_+^0\right)
	=\Big( \bigcup_k H_k^\circ\Big)^\circ\cap \mathcal{L}_+^0.\]
	Since $\bigcup_k H_k^\circ\subset H^\circ$, it follows that
	$H=\left(\bigcup_k H_k^\circ\right)^\circ\cap \mathcal{L}_+^0 \supset H^{\circ\circ}$. 
	On the other hand $H\subset H^{\circ\circ}$ always holds, so that $H=H^{\circ\circ}$ and the proof is complete.
\end{proof}

\begin{proof}[Proof of Corollary \ref{cor:main}]
	By definition it holds $H^\circ\subset (H\cap C_b)^\circ$. As for the other inclusion, fix
	$\mu\in ca_+$ such that $\mu\notin H^\circ$. Then, by definition, there exists $h\in H$ such
	that $\langle h,\mu\rangle >1$. Further we may assume that $h$ is bounded
	since $h\wedge n\leq h$, and monotonicity of $H$ implies that $h\wedge n\in H$ for every $n$.
	Moreover the measure $\mu$ is tight since $\mu(K_n^c)\downarrow\mu(\emptyset)=0$, and therefore inner regular.
	In particular there exists a sequence $(h_n)$ of upper semicontinuous function such that 
	$0\leq h_n\leq h$ and $\langle h_n,\mu\rangle \to \langle h,\mu\rangle$. 
	Hence $\langle h_{n_0},\mu\rangle>1$ which implies that 
	$\mu\notin (H\cap U_b)^\circ=(H\cap C_b)^\circ$, where the last equality
	holds by assumption. Thus indeed $H^\circ= (H\cap C_b)^\circ$.
\end{proof}

\begin{proof}[Proof of Corollary \ref{cor:functional}]
	On $\mathcal{L}^\infty$ consider the functional $\hat{\phi}(f):=\phi(f\vee 0)$ so that
	$\phi(f)=\sup_n\hat{\phi}(f\wedge n)$ for every $f\in\mathcal{L}_+^0$, see Remark \ref{rem:fatou.continuous.below}.
	By Theorem \ref{thm:main} it holds $\{\phi\leq c\}=\{\phi\leq c\}^{\circ\circ}$ for every $c\in\mathbb{R}$,
	which implies that $\hat{\phi}$ is $\sigma(\mathcal{L}^\infty,ca)$-lower semicontinuous.
	Further one has $\phi^\ast(\mu)=\sup_{f\in\mathcal{L}^\infty}(\langle f,\mu\rangle-\hat{\phi}(f))$.
	Using that $\hat{\phi}$ is increasing, it follows from the Fenchel-Moreau theorem that
	\[ \hat{\phi}(f)=\sup_{\mu\in ca_+} (\langle f,\mu\rangle -\phi^\ast(\mu))  \quad\text{for all }f\in\mathcal{L}^\infty.\]
	The claim then follows by the same arguments as in the proof of Proposition \ref{thm:main.tight.liminf}.
\end{proof}

\section{A transport duality with non-tight marginals}\label{sec:app1}

Let $\Omega_1$ and $\Omega_2$ be two $\sigma$-compact metric spaces and fix two nonempty
monotone and convex sets $H_i\subset\mathcal{L}^0_+(\Omega_i)$. 
For $i=1,2$, we assume that $H_i$ is regular and closed under $\liminf$. It follows from
Theorem \ref{thm:main} that
\begin{align*}
	H_i&=\{ f\in\mathcal{L}_+^0(\Omega_i) : \langle f,\mu\rangle \leq 1 \text{ for } \mu\in H_i^{\circ} \}
	=\{ f\in\mathcal{L}_+^0(\Omega_i) : \pi_i(f)\leq 1 \}
\end{align*}
where the functional $\pi_i:\mathcal{L}_+^0(\Omega_i)\to[0,+\infty]$ is given by 
\[\pi_i(f):=\sup_{\mu\in H_i^\circ} \langle f,\mu\rangle.\]
The space $\Omega:=\Omega_1\times\Omega_2$ is endowed with the product topology. For $h_i\in H_i$, $i=1,2$,
we write $h_1\oplus h_2:\Omega\to[0,+\infty]$ for the function 
$h_1\oplus h_2(\omega):=h_1(\omega_1)+h_2(\omega_2)$.
Define the set 
\[ H:=\{ f\in\mathcal{L}_+^0(\Omega) : f\leq h_1\oplus h_2 \text{ for } h_i\in \mathcal{L}^0_+(\Omega_i)\text{ with } \pi_1(h_1)+\pi_2(h_2)\leq 1 \}. \]
For a measure $\mu\in ca_+(\Omega)$ denote by $\mu_1:=\mu(\cdot \times\Omega_2)\in ca_+(\Omega_1)$ 
and $\mu_2:=\mu(\Omega_1\times\cdot)\in ca_+(\Omega_2)$ its marginal distributions.

\begin{theorem}
	\label{prop:transport}
	Suppose there exist measures $\mu_i^\ast\in ca_+(\Omega_i)$ 
		such that $\mu_i\ll \mu_i^\ast$ for all $\mu_i\in H^\circ_i$, for $i=1,2$.
	Then one has
	\[ H =\{ f\in\mathcal{L}_+^0(\Omega) : \pi(f)\leq 1 \}\]
	where the functional $\pi:\mathcal{L}_+^0(\Omega)\to[0,+\infty]$ is given by
	\begin{equation}\label{gen:transport}
	\pi(f):=\sup\{ \langle f,\mu\rangle : \mu\in ca_+(\Omega) \text{ such that } 
	\mu_1\in H_1^\circ, \mu_2\in H_2^\circ\}. \end{equation}
	In particular, it holds $H=H^{\circ\circ}$, where 
	$H^\circ = \{ \mu\in ca_+(\Omega) :\mu_1\in H_1^\circ, \mu_2\in H_2^\circ\}$.
\end{theorem}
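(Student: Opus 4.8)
The plan is to verify that $H$ satisfies the hypotheses of Theorem \ref{thm:main} and, separately, to compute its polar explicitly; the two asserted identities then follow at once. I would begin with the polar. Since $\langle h_1\oplus h_2,\mu\rangle=\langle h_1,\mu_1\rangle+\langle h_2,\mu_2\rangle$, a measure $\mu\in ca_+(\Omega)$ lies in $H^\circ$ if and only if $\langle h_1,\mu_1\rangle+\langle h_2,\mu_2\rangle\le 1$ whenever $\pi_1(h_1)+\pi_2(h_2)\le 1$; by positive homogeneity of $\pi_1,\pi_2$ this is equivalent to $\langle h_1,\mu_1\rangle+\langle h_2,\mu_2\rangle\le \pi_1(h_1)+\pi_2(h_2)$ for all $h_i\in\mathcal{L}^0_+(\Omega_i)$. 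Taking $h_2=0$ yields $\langle h_1,\mu_1\rangle\le\pi_1(h_1)$ for all $h_1$, whence $\mu_1\in H_1^\circ$ because $H_1=\{\pi_1\le 1\}$ by Theorem \ref{thm:main}, and symmetrically $\mu_2\in H_2^\circ$; the converse is immediate from $\langle h_i,\mu_i\rangle\le\pi_i(h_i)$ for $\mu_i\in H_i^\circ$. Hence $H^\circ=\{\mu:\mu_1\in H_1^\circ,\ \mu_2\in H_2^\circ\}$, so that $\pi(f)=\sup_{\mu\in H^\circ}\langle f,\mu\rangle$ and consequently $\{f\in\mathcal{L}^0_+(\Omega):\pi(f)\le 1\}=H^{\circ\circ}$. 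Both displayed assertions therefore reduce to $H=H^{\circ\circ}$, which I would obtain from Theorem \ref{thm:main}.

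That $H$ is nonempty, monotone and convex is immediate ($0\in H$, monotonicity from the definition, convexity from sublinearity of $\pi_1,\pi_2$). For regularity, fix $\mu\in ca_+^1(\Omega)$, so that $\mu_1,\mu_2$ are probability measures, and write $s_i(\nu):=\sup_{h\in H_i}\langle h,\nu\rangle$ for the support function of $H_i$. For $h\in H\cap U_b$ with $h\le h_1\oplus h_2$ and $t_i:=\pi_i(h_i)$ one has $\langle h,\mu\rangle\le t_1 s_1(\mu_1)+t_2 s_2(\mu_2)\le\max(s_1(\mu_1),s_2(\mu_2))$ by scaling $h_i$ by $1/t_i$ inside $H_i$, so $\sup_{h\in H\cap U_b}\langle h,\mu\rangle\le\max(s_1(\mu_1),s_2(\mu_2))$. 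Testing with $h_1\oplus 0$ and $0\oplus h_2$ gives the reverse bound $\sup_{h\in H\cap C_b}\langle h,\mu\rangle\ge\max\big(\sup_{H_1\cap C_b}\langle\cdot,\mu_1\rangle,\sup_{H_2\cap C_b}\langle\cdot,\mu_2\rangle\big)$. Since Corollary \ref{cor:main} applied to $H_i$ gives $H_i^\circ=(H_i\cap C_b)^\circ$ and hence, by homogeneity of support functions, $s_i(\mu_i)=\sup_{H_i\cap C_b}\langle\cdot,\mu_i\rangle$, the two bounds collapse and $H$ is regular. Neither step uses the domination hypothesis.

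The main obstacle is closedness under $\liminf$, and this is precisely where the assumption $\mu_i\ll\mu_i^\ast$ enters. By Remark \ref{rem:fatou.continuous.below} it suffices to treat an increasing sequence $f_n\uparrow f$ with $f_n\le h_1^n\oplus h_2^n$, $h_i^n\ge 0$ and $\pi_1(h_1^n)+\pi_2(h_2^n)\le 1$; the difficulty is that the pointwise limit does not respect the coupling, since $\liminf$ of a sum dominates the sum of the marginal $\liminf$'s, so one cannot simply pass to $\liminf_n h_i^n$. To overcome this I would first invoke a Halmos--Savage argument to replace $\mu_i^\ast$ by a dominating measure that itself belongs to $H_i^\circ$; then $\int h_i^n\,d\mu_i^\ast\le\pi_i(h_i^n)\le 1$ yields $L^1(\mu_i^\ast)$-boundedness, and a Komlós argument with a common set of Cesàro weights for the two factors (extracted by a double subsequence) produces convex combinations $\bar h_i$ of the $h_i^n$ converging $\mu_i^\ast$-almost everywhere. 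The Fatou-type estimate $\pi_i(\bar h_i)\le\liminf_n\pi_i(\text{combination})$ holds because $\mu\ll\mu_i^\ast$ for every $\mu\in H_i^\circ$ turns $\mu_i^\ast$-a.e.\ convergence into $\mu$-a.e.\ convergence, so that $\pi_1(\bar h_1)+\pi_2(\bar h_2)\le 1$.

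The matching convex combinations of the $f_n$ still increase to $f$, giving $f\le\bar h_1\oplus\bar h_2$ off the exceptional set $(N_1\times\Omega_2)\cup(\Omega_1\times N_2)$, where $N_i$ is the $\mu_i^\ast$-null complement of the convergence set. I would repair the bound there by redefining $\bar h_i:=+\infty$ on $N_i$; since $\mathcal{L}^0_+(\Omega_i)$ consists of $[0,+\infty]$-valued functions this is admissible, and since $\mu_i(N_i)=0$ for every $\mu_i\in H_i^\circ$ it leaves $\pi_i(\bar h_i)$ unchanged. Now $\bar h_1\oplus\bar h_2=+\infty$ on the exceptional set and dominates $f$ elsewhere, so $f\le\bar h_1\oplus\bar h_2$ holds everywhere and $f\in H$. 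Having verified that $H$ is nonempty, monotone, convex, regular and closed under $\liminf$, Theorem \ref{thm:main} delivers $H=H^{\circ\circ}$, which together with the computation of $H^\circ$ gives $H=\{f:\pi(f)\le 1\}$ and completes the proof.
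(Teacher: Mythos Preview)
Your proposal is correct and follows essentially the same route as the paper: compute $H^\circ$ explicitly, verify the hypotheses of Theorem~\ref{thm:main} (nonempty, monotone, convex, regular, $\liminf$-closed), and apply it. The only differences are cosmetic. For $\liminf$-closedness the paper defines the limiting majorants as $h_i:=\limsup_n \tilde h_i^n$ (which is everywhere defined and satisfies $\limsup_n(\tilde h_1^n\oplus\tilde h_2^n)\le h_1\oplus h_2$ pointwise, while $h_i=\liminf_n\tilde h_i^n$ holds $\mu_i^\ast$-a.e.\ for the Fatou estimate), whereas you achieve the same effect by redefining $\bar h_i:=+\infty$ on the null set $N_i$; both tricks are equivalent. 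Your Halmos--Savage step to arrange $\mu_i^\ast\in H_i^\circ$ is valid but unnecessary: the version of Koml\'os used in the paper (\cite[Lemma~A.1]{delbaen1994general}) applies to arbitrary sequences in $L^0_+$ without an $L^1$-bound.
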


\begin{remarks}
	{\rule{0mm}{1mm}\\[-3.25ex]\rule{0mm}{1mm}}
	\begin{enumerate}[ 1.]
	\item 
	If each $H^\circ_i$ consists of exactly one probability measure $\nu_i$, 
	the optimization problem \eqref{gen:transport} reduces to the Monge-Kantorovich transport problem, 
	see the original paper \cite{kantorovich2006problem} or \cite{villani2008optimal} for a modern view and many applications.
	In case that the functionals $\pi_i$ are linear, the programming duality for measurable functions 
	was first shown in \cite{kellerer1984duality}, see also \cite{beiglbock2011duality}.
	Duality for measurable functions is important as it e.g.~allows to characterize all negligible sets: A Borel set $N\subset \Omega_1\times\Omega_2$ is a $\mu$-zero set for all measures $\mu$
	on the product space with marginals $\mu_i=\nu_i$ if and only if 
	$N\subset (N_1\times\Omega_2)\cup(\Omega_1\times N_2)$ for $\nu_i$-zero sets $N_i\subset\Omega_i$.
	These type of results have geometric applications, see e.g.~\cite{beiglboeck2017optimal} and the discussion therein.
	\item 
	It follows from Theorem \ref{prop:transport} that for each  $f\in\mathcal{L}_+^0(\Omega)$ 
	the following superhedging duality holds:
	\[	\phi(f)
	:=\inf\Big\{m\in\mathbb{R}_+ : f\le f_1\oplus f_2 \text{ for } f_i\in\mathcal{L}^0_+(\Omega_i)\mbox{ with }\sum_i \pi_i(f_i)\le m\Big\}
	=\pi(f).\]
	Indeed, for every $m>0$, one has $f/m\in H$
	if and only if $f\le f_1\oplus f_2$ for $f_i\in\mathcal{L}^0_+(\Omega_i)$ with $\sum_i \pi_i(f_i)\le m$,
	so that $\phi(f)\le m$ whenever $\pi(f)\le m$. In financial terms, the seller of a contingent claim $f$ 
	protects himself against losses by optimally investing in the traded derivatives $f_1$ and $f_2$ with ask prices $\pi_i(f_i)$.
	While for the Kantorovich transport problem the pricing functionals  $\pi_i(f_i)=\langle f_i,\nu_i\rangle$ 
	are linear, in \cite{bartl2017duality} the pricing rules $\pi_i$, $i=1,2$, 
	are assumed to be sublinear reflecting market incompleteness. However, the pricing rules in 
	\cite{bartl2017duality} are continuous from above, i.e.~the marginals $H^\circ_i$ are assumed to be tight.
	\item If both $H_i$ are such that $\mathop{\mathrm{lim\,med}}_n h^n_i\in H$ for every 
	sequence $(h^n_i)$ in $H_i$, then the assumption that $H_i^\circ$ are dominated is not needed 
	(for the concept of medial limits we refer to the next section).
	In that case, one replaces $h_i:=\limsup_n \tilde h^n_i$ by 
	$h_i:=\mathop{\rm lim\,med}_n h^n_i$ in the following proof.
\end{enumerate}
\end{remarks}

\begin{proof}
	The goal is to apply Theorem \ref{thm:main}. It is clear that $H$ is nonempty, monotone, and convex.
	Moreover, for every $\mu\in ca_+(\Omega)$ one has
	\begin{align}
	\label{eq:regular.transport}
	\sup_{h\in H\cap C_b(\Omega)} \langle h,\mu\rangle
	=\max_{i=1,2} \sup_{h_i\in H_i} \langle h_i,\mu_i\rangle,
	\end{align} 
	and the same holds true if $H\cap C_b(\Omega)$ is replaced by $H$. Indeed, since every $h\in H$ satisfies 
	$h\leq h_1\oplus h_2$ for $h_i\in \mathcal{L}^0_+(\Omega_i)$, $i=1,2$, with $\pi_1(h_1)+\pi_2(h_2)\leq 1$,
	it follows that
	\[ \langle h,\mu\rangle 
	\leq \langle h_1\oplus h_2,\mu\rangle
	=\sum_i \langle h_i,\mu_i\rangle \le \sum_i \pi_i(h_i)\sup_{f\in H_i}\langle f,\mu_i\rangle
	\leq \max_{i} \sup_{f\in H_i} \langle f,\mu_i\rangle,\]
	because $h_i/\pi_i(h_i)\in H_i$ so that $\langle h_i,\mu_i\rangle \leq \pi_i(h_i)\sup_{f\in H_i}\langle f,\mu_i\rangle$
	(with the convention $0\cdot(+\infty)=+\infty$). 
	This shows that the right hand side of \eqref{eq:regular.transport} is greater than the left hand side.
	As for the other inequality, assume without loss of generality that the
	maximum on the right hand side is attained at $i=1$.
	By Corollary \ref{cor:main} one has
	 $\sup_{f\in H_1}\langle f,\mu_1\rangle=\sup_{f\in H_1\cap C_b(\Omega_1)}\langle f,\mu_1\rangle$, 
    so that for every $\varepsilon>0$ there exists $h_1\in H_1\cap C_b(\Omega_1)$
	which satisfies $\max_{i} \sup_{f\in H_i} \langle f,\mu_i\rangle\le \langle h_1,\mu_1\rangle+\varepsilon$.
	Define the function $h\in H\cap C_b(\Omega)$ by $h(\omega):=h_1(\omega_1)$.
	Then, since $\langle h,\mu\rangle =\langle h_1,\mu_1\rangle$, it follows that 
    $\max_{i} \sup_{h_i\in H_i} \langle h_i,\mu_i\rangle\le \sup_{h\in H\cap C_b(\Omega)} \langle h,\mu\rangle$. In particular, one has
	\[ H^\circ = \{ \mu\in ca_+(\Omega) \text{ such that } \mu_1\in H_1^\circ, \mu_2\in H_2^\circ\}. \]

	We are left to show that $H$ is closed under $\liminf$. Fix an increasing sequence $(h^n)$ in $H$.
	Then $h^n\leq h_1^n\oplus h_2^n$ for $h_i^n\in \mathcal{L}^0_+(\Omega_i)$ with $\pi_1(h_1^n)+\pi_2(h_2^n)\leq 1$.
	Since $h_i^n\geq0$, 
	we can apply the Koml\'os' theorem (see \cite[Lemma A.1]{delbaen1994general}) to obtain forward convex 
	combinations $\tilde{h}^n_i\in\mathop{\mathrm{conv}}\{h_i^k: k \geq n\}$
	which have a $\mu_i^\ast$-almost sure limit. Define $h_i:=\limsup_n \tilde h^n_i\in \mathcal{L}^0_+$, so that
	$\mu^\ast_i(h_i=\liminf_n \tilde h^n_i)=1$. By the bipolar representation of $H_i$ and Fatou's lemma, it follows that $h_i\in H_i$.
	Moreover, we obtain	\[\pi_1(h_1)+\pi_2(h_2)
	\leq \liminf_n \big( \pi_1(\tilde{h}^n_1)+\pi_2(\tilde{h}^n_2) \big)
	\leq \liminf_n \big( \pi_1(h^n_1)+\pi_2(h^n_2) \big)
	\leq 1 \]
	again by Fatou's lemma and convexity of $\pi_i$.
	But then
	\[ \sup_n h^n=
	\liminf_n \tilde{h}^n 
	\leq \liminf_n (\tilde{h}^n_1\oplus \tilde{h}^n_2)
	\leq \limsup_n (\tilde{h}^n_1\oplus \tilde{h}^n_2)
	\leq h_1\oplus h_2,  \]
	which shows that $h\in H$.
	Therefore, 
	\[ H=\{ h\in\mathcal{L}^0_+(\Omega) : \langle h,\mu\rangle \leq 1 \text{ for }\mu\in H^\circ \}
	=\{ h\in\mathcal{L}^0_+(\Omega) : \pi(f)\leq 1 \}, \]
	where the first equality follows from Theorem \ref{thm:main}.
\end{proof}

\section{Robust hedging in discrete time}\label{sec:app2}

Given a time horizon $T\in\mathbb{N}$, we consider the state space $\Omega:=\mathbb{R}_{++}^T:=(0,+\infty)^T$, and denote by
$S_t\colon\Omega\to\mathbb{R}_{++}$ the projection on the $t$-th coordinate $S_t(\omega)=\omega_t$. We assume that
the canonical process $(S_t)_{t=1,\dots,T}$ describes the discounted price process of a financial asset.
We consider an agent who is allowed to invest dynamically in this asset and statically in a plain vanilla option on $S_T$.
Thus, the set of trading strategies $\Theta$ consists of pairs $(\vartheta,g)$ where $\vartheta=(\vartheta_2,\dots,\vartheta_T)$ and 
each $\vartheta_t\colon\mathbb{R}_{++}^{t-1}\to\mathbb{R}$ is universally measurable, 
$g\colon\mathbb{R}_{++}\to\mathbb{R}\cup\{+\infty\}$ is a Borel measurable function which is bounded from 
below and satisfies $\varphi(g)\leq 0$. Here $\varphi(g)$
denotes the price of the plain vanilla option $g(S_T)$, given by the pricing functional 
\[\varphi(g):=\sup_{\mu\in\mathcal{Q}}\langle g,\mu\rangle,\] 
where  $\mathcal{Q}$ is a set of probability measures on $\mathbb{R}_{++}$. We assume that
$\mathcal{Q}$ is nonempty, convex, and compact w.r.t.~the weak topology induced by the 
continuous bounded functions on $\mathbb{R}_{++}$. The outcome of the trading 
strategy $(\vartheta, g)\in\Theta$ is the universally measurable function 
$(\vartheta\cdot S)_T+g(S_T):\Omega\to\mathbb{R}\cup\{+\infty\}$, where  $g(S_T)(\omega):=g(S_T(\omega))$  and
\[(\vartheta\cdot S)_T(\omega):=\sum_{t=2}^{T}\vartheta_t\left(S_1(\omega),\dots,S_{t-1}(\omega)\right)(S_t(\omega)-S_{t-1}(\omega)).\]
As already mentioned in the introduction, this setting is a discrete-time analogue to \cite{dolinsky2014martingale}, however we allow for sublinear pricing functionals $\varphi$. Related results are given in \cite{bouchard2015arbitrage} where (for linear pricing functionals) the duality and the existence of optimal strategies are shown be means of dynamic programming. In martingale optimal transport, where static options are available for all maturities $t=1,\dots,T$,
the duality is shown in \cite{beiglbock2013model}
for semicontinuous functions $f$ (see also \cite{acciaio2016model}), and in \cite{beiglbock2017complete} for measurable $f$ under the assumption that $T=2$. See also the recent book \cite{henry2017model} for an overview.

In the following we make use of so-called medial limits, see \cite{meyer1973limites,normann1976martin}. A medial limit is a positive linear functional 
$\mathop{\rm lim\,med}\colon l^\infty\to\mathbb{R}$ which satisfies $\liminf\le\mathop{\rm lim\,med}\le\limsup$
and
$\omega\mapsto f(\omega):=\mathop{\rm lim\,med}_n f_n(\omega)$ is universally measurable
for every bounded sequence of universally measurable functions $(f_n)$. We assume that a medial limit exists,
which for instance is guaranteed under the usual axioms of ZFC and Martin's Axiom.
For a discussion of the medial limit as a tool for pointwise convex optimization problems we refer to 
\cite{bartl2017robust}, and as a tool for the aggregation of stochastic integrals to \cite{nutz2012pathwise}.

\begin{proposition}
	\label{prop:semistatic}
	Assume that $\lim_{k\to\infty}\varphi((id-k)\vee 0)=0$, there exists $\mu^\ast\in\mathcal{Q}$ such that 
	$\mu\ll\mu^\ast$ for all $\mu\in\mathcal{Q}$,
	and the smallest interval containing the support of $\mu^\ast$ equals $\mathbb{R}_{++}$. Then one has
	\begin{align}
	&\{ f\in \mathcal{L}_{b-}^0: f\leq (\vartheta\cdot S)_T + g(S_T) \text{ for some } (\vartheta,g)\in \Theta \}\nonumber\\
	=&\{ f\in \mathcal{L}_{b-}^0 : \langle f,\mu\rangle \leq 0 \text{ for all } \mu\in\mathcal{M}(\mathcal{Q}) \}\label{attainable}
	\end{align} 
	where $\mathcal{M}(\mathcal{Q})$ denotes the set of all martingale measures $\mu$ for $S$ which satisfy
	$\mu_T:=\mu\circ S_{T}^{-1}\in\mathcal{Q}$.
\end{proposition}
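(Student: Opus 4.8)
The plan is to realise the left-hand side of \eqref{attainable} as the superhedging cone
\[
A:=\bigl\{f\in\mathcal{L}_{b-}^0:f\le(\vartheta\cdot S)_T+g(S_T)\text{ for some }(\vartheta,g)\in\Theta\bigr\},
\]
and to apply the bipolar theorem to it. Because $(\vartheta\cdot S)_T$ is linear in $\vartheta$, the map $g\mapsto g(S_T)$ is linear, and $\varphi$ is sublinear with $\varphi(\lambda g)=\lambda\varphi(g)$, the set $A$ is a monotone convex cone containing $0$. For a cone one has $A^\circ=\{\mu\in ca_+:\langle f,\mu\rangle\le0\text{ for all }f\in A\}$, and since $A^\circ$ is again a cone, $A^{\circ\circ}=\{f:\langle f,\mu\rangle\le0\text{ for all }\mu\in A^\circ\}$; this reconciles the normalisation $\le1$ in the definition of the (bi)polar with the bound $\le0$ in \eqref{attainable}. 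It therefore suffices to prove $A=A^{\circ\circ}$ by means of Corollary \ref{cor:verschieben}, and then to identify $A^\circ$ with the cone generated by $\mathcal{M}(\mathcal{Q})$.

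To invoke Corollary \ref{cor:verschieben} I must verify that $A$ is regular, tight, and closed under $\liminf$. For tightness, fix $n$ and $\varepsilon>0$ and superhedge $n1_{K^c}-\varepsilon$ off a compact box $K=[a,b]^T$. The event $\{\max_tS_t>b\}$ is superhedged by a multiple of the call $(S_T-k)\vee0$ together with a ``sell after crossing $b$'' stochastic integral, at cost $\tfrac{n}{b-k}\varphi((id-k)\vee0)$, which tends to $0$ as $b\to\infty$ by the standing assumption $\lim_k\varphi((id-k)\vee0)=0$; symmetrically $\{\min_tS_t<a\}$ is superhedged by a put and a ``buy after crossing $a$'' integral, at a cost controlled by $\sup_{\nu\in\mathcal{Q}}\nu(\{x<a\})$, which vanishes as $a\to0$ by the tightness of $\mathcal{Q}$ implied by its weak compactness. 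Subtracting $\varepsilon$ yields an option with $\varphi\le0$, so $n1_{K^c}-\varepsilon\in A$. Regularity I would obtain by comparing the support functions of $A\cap C_b$ and $A\cap U_b$: both are $+\infty$ in the non-martingale directions (approximate truncated simple trading gains by bounded continuous functions) and otherwise reduce to the option price, where $C_b$ suffices because $\mathcal{Q}$ is weakly compact.

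The main obstacle is closedness under $\liminf$. By monotonicity it suffices, via Remark \ref{rem:fatou.continuous.below}, to show $\sup_nh^n\in A$ for an increasing sequence $(h^n)$ in $A$, say $h^n\le(\vartheta^n\cdot S)_T+g^n(S_T)$ with $\varphi(g^n)\le0$. The difficulty is that neither the options $g^n$ nor the integrands $\vartheta^n$ converge, and that the dynamic part lives on all of $\Omega=\mathbb{R}_{++}^T$, which is not dominated by a single measure. I would treat the two parts separately. For the terminal options the domination $\mu\ll\mu^\ast$ lets me apply Koml\'os' theorem (\cite[Lemma A.1]{delbaen1994general}) to pass to forward convex combinations $\tilde g^n$ converging $\mu^\ast$-a.s.; setting $g:=\limsup_n\tilde g^n$, Fatou's lemma (using the lower bounds) gives $\varphi(g)\le\liminf_n\varphi(\tilde g^n)\le0$. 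For the trading gains the matching combinations $(\tilde\vartheta^n\cdot S)_T$ cannot be controlled $\mu^\ast$-a.s.\ on the whole path space, so I would aggregate them pathwise by a medial limit---after a truncation keeping the sequences in $l^\infty$---to obtain, in the spirit of the pathwise integration of \cite{nutz2012pathwise}, a universally measurable strategy $\vartheta$ with $(\vartheta\cdot S)_T=\mathop{\rm lim\,med}_n(\tilde\vartheta^n\cdot S)_T$. Since $\mathop{\rm lim\,med}$ is positive, linear, and dominated by $\liminf$ from below, applying it to the convexified inequalities yields $\sup_nh^n=\mathop{\rm lim\,med}_n\tilde h^n\le(\vartheta\cdot S)_T+g(S_T)$, so $\sup_nh^n\in A$; the hypothesis that the support of $\mu^\ast$ spans $\mathbb{R}_{++}$ serves to keep the market nondegenerate so that such limiting strategies remain admissible. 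The interplay of a dominated a.s.\ limit for the option with a non-dominated medial-limit aggregation for the integral is the crux of the argument.

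With the hypotheses in place, Corollary \ref{cor:verschieben} gives $A=A^{\circ\circ}$, and it remains to compute $A^\circ$. Taking $g=0$ and both $\pm\vartheta$ forces $\langle(\vartheta\cdot S)_T,\mu\rangle=0$ for every bounded strategy, i.e.\ $\mu$ is a martingale measure for $S$; choosing $g=(id-k)\vee0-\varphi((id-k)\vee0)$ shows $S_T\in L^1(\mu)$, so this is meaningful. The constraints from the options with $\varphi(g)\le0$ say precisely that $\mu_T$ lies in the polar cone of $\{\varphi\le0\}$, which by convexity and weak compactness of $\mathcal{Q}$ is the closed cone $\{\lambda\nu:\lambda\ge0,\ \nu\in\mathcal{Q}\}$. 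Since a martingale measure conserves total mass, $\mu(\Omega)=\mu_T(\mathbb{R}_{++})=\lambda$, so normalising by $\lambda$ exhibits $\mu$ as a nonnegative multiple of an element of $\mathcal{M}(\mathcal{Q})$, and conversely every such multiple lies in $A^\circ$. Hence $A^\circ$ is the cone generated by $\mathcal{M}(\mathcal{Q})$, and therefore $A^{\circ\circ}=\{f:\langle f,\mu\rangle\le0\text{ for all }\mu\in\mathcal{M}(\mathcal{Q})\}$, the right-hand side of \eqref{attainable}. The inclusion $\subseteq$ is also visible directly: for $\mu\in\mathcal{M}(\mathcal{Q})$ the integral $(\vartheta\cdot S)$ is a $\mu$-supermartingale, being bounded below by the $\mu$-integrable $f-g(S_T)$, whence $\langle f,\mu\rangle\le\langle g,\mu_T\rangle\le\varphi(g)\le0$.
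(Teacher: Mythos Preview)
Your architecture matches the paper's: verify that the cone $A$ is monotone, convex, tight, regular and $\liminf$-closed, apply the bipolar result (the paper uses Proposition~\ref{thm:main.tight.liminf} after checking normalization via constant paths, but your route through Corollary~\ref{cor:verschieben} is equivalent), and identify $A^\circ$ with the cone over $\mathcal{M}(\mathcal{Q})$. Your sketches for tightness, regularity and the computation of $A^\circ$ are essentially what the paper does, with the paper providing explicit strategies and invoking \cite{jacod1998local} for $\langle(\vartheta\cdot S)_T,\mu\rangle=0$ on the martingale side.

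The genuine gap is the $\liminf$-closedness step. Writing ``aggregate $(\tilde\vartheta^n\cdot S)_T$ by a medial limit after a truncation keeping the sequences in $l^\infty$'' does not work: any truncation destroys the superhedging inequality $\tilde h^n\le(\tilde\vartheta^n\cdot S)_T+\tilde g^n(S_T)$, and without it the sequence $((\tilde\vartheta^n\cdot S)_T(\omega))_n$ need not lie in $l^\infty$, so the medial limit is undefined. Moreover, even when it exists, a medial limit of stochastic integrals is not a priori of the form $(\vartheta\cdot S)_T$ for some strategy $\vartheta$. The paper resolves both issues by proving, without truncation, that for every fixed $x$ the sequence $(\vartheta^n_t(x))_n$ is bounded, and then setting $\vartheta_t:=\mathop{\mathrm{lim\,med}}_n\vartheta^n_t$ \emph{componentwise}; linearity of the medial limit on bounded sequences then gives $(\vartheta\cdot S)_T=\mathop{\mathrm{lim\,med}}_n(\vartheta^n\cdot S)_T$ automatically. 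This boundedness is exactly where the full-support hypothesis on $\mu^\ast$ enters: after Koml\'os one redefines $g:=+\infty$ off the convergence set $C:=\{x:\tilde g^n(x)\to g(x)\in\mathbb{R}\}$, which has $\mu^\ast$-measure one and therefore meets every subinterval of $\mathbb{R}_{++}$. If, say, $\limsup_n\vartheta^n_2(x)=+\infty$, evaluate the inequality on the path $\omega=(x,y,\dots,y)$ with $y\in C\cap(0,x)$ to get $\vartheta^n_2(x)(y-x)\ge c-g^n(y)$; the left side is unbounded below while the right side is bounded since $y\in C$---a contradiction. An induction over $t$ treats all the $\vartheta^n_t$. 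Your remark that the support hypothesis ``keeps the market nondegenerate so that limiting strategies remain admissible'' gestures at the right place but misses this mechanism; without it the medial-limit aggregation cannot be carried out.
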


\begin{remarks}{\rule{0mm}{1mm}\\[-3.25ex]\rule{0mm}{1mm}}
	\begin{enumerate}[ 1.]
	\item It follows from equation \eqref{attainable} that the set of all bounded attainable outcomes
	\[\{ f\in \mathcal{L}^\infty: f\leq (\vartheta\cdot S)_T + g(S_T) \text{ for some } (\vartheta,g)\in \Theta \}\]
	is $\sigma(\mathcal{L}^\infty,ca)$-closed. In general, the set of attainable outcomes 
	under semistatic hedging is not closed, see \cite{acciaio2017space}. 
	Moreover, equation \eqref{attainable} implies that for each $f\in\mathcal{L}^0_{b-}$ and $m\in\mathbb{R}$ one has
	$f\leq m+(\vartheta\cdot S)_T+g(S_T)$ for some $(\vartheta,g)\in\Theta$ 
	if and only if $\langle f,\mu\rangle\le m$ for all $\mathcal{M}(\mathcal{Q})$,
	which yields the superhedging duality
	 \[\inf\left\{ m\in\mathbb{R}: m+(\vartheta\cdot S)_T + g(S_T)\ge f \text{ for some } (\vartheta,g)\in \Theta \right\}
	 =\sup_{\mu\in\mathcal{M}(\mathcal{Q})}\langle f,\mu\rangle.\]

	\item Even though $\mathcal{Q}$ is dominated by the probability measure $\mu^\ast$,
	one can check that the set of pricing measures $\mathcal{M}(\mathcal{Q})$ is not dominated in general. 

	\item If every $\mu\in\mathcal{Q}$ has the same barycenter $S_0=\langle id,\mu\rangle\in\mathbb{R}_{++}$, 
	then Proposition \ref{prop:semistatic} holds for 
	extended trading strategies $(\vartheta_1,\vartheta,g)$ with $\vartheta_1\in\mathbb{R}$ and $(\vartheta,g)\in\Theta$.
		
	\item If instead of the state space $\Omega=\mathbb{R}_{++}^T$ one considers $\Omega=[0,+\infty)^T$, 
	Proposition \ref{prop:semistatic} does not hold unless one allows $\vartheta_t$ to assume the value $+\infty$.
	To see this, let $T=2$ and $\mathcal{Q}$ be the convex hull
	of $\mu^\ast$ and $\{ \xi_nd\lambda :n\in\mathbb{N}\}$, where $\mu^\ast:=(\delta_0+\xi d\lambda)/2$
	for a strictly positive density $\xi$ (w.r.t.~the Lebesgue measure $\lambda$) 
	with finite first moment and $\xi_n d\lambda \to d\mu^\ast$. It is possible to choose $(\xi_n)$ such that
	  $\mathcal{Q}$ fulfills the assumptions of Proposition \ref{prop:semistatic}. Define $f:=1_{\{0\}\times(0,1)}$ so that 
	$\langle f,\mu\rangle=0$ for every $\mu\in\mathcal{M}(\mathcal{Q})$,
	and let $(\vartheta,g)$ such that $(\vartheta\cdot S)_T+g(S_T)\geq f$.
	We will see in the proof of the proposition that $g$ has to be positive.
	Hence, whenever $\vartheta_2(0)\neq+\infty$ it follows that
	$g(x)\geq (1- \vartheta_2(0)x)\vee 0$ for $x\in(0,1)$ and therefore
	$\varphi(g)\geq \sup_n \int_0^1 ((1-\vartheta_2(0)x)\vee 0)\xi_n(x)\, dx\geq 1/2$.
	\end{enumerate}
\end{remarks}

\begin{proof}[Proof of Proposition \ref{prop:semistatic}]
	The goal is to apply Proposition \ref{thm:main.tight.liminf} to the set
	\[ H:=\{ h\in\mathcal{L}_{b-}^0
	: h\leq (\vartheta\cdot S)_T + g(S_T) \text{ for some } (\vartheta,g)\in \Theta \}.\]
	It is clear that $H$ is monotone and contains 0, and we claim that $H$ is normalized and tight. 
	If $m\in H$ for some $m\geq 0$, then
	$m\leq g(x)$ for every $x\in\mathbb{R}_{++}$ since $(\vartheta\cdot S)_T=0$
	on the constant path $\omega=(x,\dots,x)$. Since $\varphi(g)\leq 0$ it follows that $m=0$.
	To show that $H$ is tight, fix $\varepsilon>0$ and $n\in\mathbb{N}$. 
	Due to compactness of the set $\mathcal{Q}$, one can show that there exist $\delta>0$ such that
	$\mu((0,2\delta])\leq \varepsilon/(2n)$ for every $\mu\in\mathcal{Q}$.
	In combination with the assumption that $\lim_k\varphi((id-k)\vee 0)=0$, there thus exists $k\in\mathbb{N}$
	such that $\varphi(g)\leq0$ where $g(x):=nx1_{[k-1,\infty)}(x)+2n1_{(0,2\delta]}(x)-\varepsilon$.
	Define the stopping times $\tau:=\inf\{ t\ge 1: S_t>k\}$  and $\sigma:=\inf\{t\ge 1 : S_t<\delta\}$
	as well as $\vartheta_t:=-n1_{\{t\geq \tau +1\}}+n/\delta1_{\{t\geq\sigma+1\}}$.
	Then $(\vartheta\cdot S)_T+g(S_T)\geq m1_{K^c}-\varepsilon$ for $K:=[\delta,k]^T$ so that
	$H$ is tight.
	
	We next show that $\sup_n h_n\in H$ whenever $(h_n)$ is an increasing
	sequence in $H$. Since $h_1\in \mathcal{L}^0_{b-}$ there exists $c\in\mathbb{R}$ with
	$h_n\geq h_1\geq c$.
	Let	$(\vartheta^n, g^n)\in\Theta$ such that $(\vartheta^n\cdot S)_T+g^n(S_T)\geq h_n$.
	Considering the constant path $\omega=(x,\dots,x)$ it follows that
	$c\leq h_n(x,\dots,x)\leq g^n(x)$. Since $\langle g^n,\mu^\ast\rangle\leq \varphi(g^n)\leq 0$,
	we can apply the Koml\'os' theorem (see \cite[Lemma A.1]{delbaen1994general})
	in order to obtain a sequence of forward convex combinations
	$\tilde{g}^n\in\mathrm{conv}\{g^k:k\ge n\}$
	which converge $\mu^\ast$-almost surly to a Borel measurable function $g:\mathbb{R}_{++}\to[c,+\infty]$.
	By convexity of $\varphi$ it holds $\varphi(\tilde{g}^n)\leq 0$ and 
	by Fatou's lemma it follows that $\varphi(g)\leq 0$ so that the Borel set
	\[C:=\{ x\in\mathbb{R}_{++} : \tilde{g}^n(x)\to g(x)\in\mathbb{R}\}\]
	has $\mu^\ast$-measure one. Redefine $g$ to be $+\infty$ on the complement of this set.
	Passing to the same convex combinations used for $\tilde{g}_n$ also for $\vartheta^n$ and $h_n$, 
	it holds in obvious notation that $\tilde{g}^n+(\tilde{\vartheta}^n\cdot S)_T\geq\tilde{h}_n$.
	For the purpose of readability we again write $g^n$, $\vartheta^n$ and $h_n$.
	Assume that there exists $x\in\mathbb{R}_{++}$ such that the sequence $(\vartheta_2^n(x))$ is not bounded.
	We focus on the case that $\limsup_n\vartheta_2^n(x)=+\infty$, the other case is treated analogously.
	Since $(\vartheta^n\cdot S)_T\geq c- g^n$,
	it follows for any path of the form $\omega=(x,y,\dots,y)\in\Omega$ with $y\in(0,x)$ that
	\[ -\infty=\liminf_n \big( \vartheta_2^n(x)(y-x) \big) 
	=\liminf_n (\vartheta^n\cdot S)_T(\omega)
	\geq \liminf_n (c-g^n(y)). \]
	However, since $(g^n(y))$ is bounded for $y\in C$ and  $C\cap (0,x)\neq \emptyset$ by assumption, 
	this already yields a contradiction.
	By induction it follows that $(\vartheta_t^n)$ is pointwise bounded for every $t$, 
	so that $\vartheta_t:=\mathop{\mathrm{lim\,med}}_n \vartheta_t^n$ 
	is well-defined.  By monotonicity of the medial limit it follows that
	\[\sup_n h_n
	=\mathop{\mathrm{lim\,med}}_n h_n
	\leq \mathop{\mathrm{lim\,med}}_n \big((\vartheta^n\cdot S)_T+g^n(S_T) \big)
	\leq (\vartheta\cdot S)_T+ g(S_T) \]
	which shows that $\sup_n h_n\in H$.
	
	We finally show that $H$ is regular, that is 
	\[a:=\sup_{h\in H\cap U_b}\langle h,\mu\rangle
	=\sup_{h\in H\cap C_b}\langle h,\mu\rangle=:b\]
	for every $\mu\in ca_+^1$.
	First notice that $a\geq b$ and since $\lambda h\in H$ for every 
	$h\in H$ and $\lambda \in\mathbb{R}_+$, it follows that $a,b\in\{0,+\infty\}$.
	Let $\mu\in\mathcal{M}(\mathcal{Q})$, $h\in H$ (not necessarily upper semicontinuous) and $(\vartheta,g)\in\Theta$ such that
	$h\le (\vartheta\cdot S)_T+g(S_T)$.
	It follows from a result on local martingales that
	$\langle (\vartheta\cdot S)_T,\mu\rangle =0$
	(see \cite[Theorem 1 and Theorem 2]{jacod1998local}). 
	Hence $\langle h,\mu\rangle \leq\varphi(g)\leq 0$, so that $a=b=0$,
	and in particular $\mu\in H^\circ$. Conversely, let $\mu\not\in\mathcal{M}(\mathcal{Q})$.
    First, if $\mu_T\notin\mathcal{Q}$, the hyperplane separation theorem yields 
	the existence of a continuous bounded function $g\colon \mathbb{R}_{++}\to\mathbb{R}$ such that
	$\langle g,\mu_T\rangle>0$ and $\varphi(g)\leq 0$. 
	For $h:=g\circ S_T\in H\cap C_b$ it follows that $b\ge \langle h,\mu\rangle>0$, and therefore $b=+\infty$. 
	Second, if $S_t\notin L^1(\mu)$ for some $t\in\{1,\dots,T\}$, then $b=+\infty$.
	Indeed,
	\[(S_t-\varphi(id))\wedge k\leq (\vartheta\cdot S)_T+g(S_T)\quad \text{for }g(x):=x-\varphi(id)
	\text{ and } \vartheta_s:=-1_{\{s\geq t+1\}}\] 
	implies that $(S_t-\varphi(id))\wedge k\in H\cap C_b$
	and therefore $b\ge \langle (S_t-\varphi(id))\wedge k,\mu\rangle\to +\infty$.
	Third, if $S$ is not a martingale under $\mu$, then there
	exists $t\in\{1,\dots,T\}$ and a continuous function $\xi$ of the first $t-1$ components 
	with values in $[-1,1]$ such that
	$\varepsilon:=\langle \xi\cdot (S_t-S_{t-1}),\mu\rangle> 0$,
	where, by integrability of $S_t-S_{t-1}$, we may assume that $\xi$ has support 
	$(0,k]^{t-1}$. Define 
	\[f:=(-n)\vee \big( \xi\cdot (S_t-S_{t-1}) \big)\wedge n\in C_b,\]
	and the strategy $(\vartheta_s)$ as $0$ if $s<t$, 
	$\xi$ if $s=t$, and $-1_{\{S_t\geq n-k\}}$ if $s\ge t+1$, as well as $g(x):=x1_{[n-k,\infty)}(x)-\varepsilon/2$.
    For $n\in\mathbb{N}$ large enough one has $\varphi(g)\leq 0$, and since 
	\[(\vartheta\cdot S)_T+g(S_T)
	= S_T1_{\{S_T\geq n-k\}} + \xi\cdot (S_t-S_{t-1})+(S_t-S_T)1_{\{S_t\geq n-k\}}-\varepsilon/2\]
	and the fact that $S_t\geq n-k$ whenever $\xi\cdot (S_t-S_{t-1})\leq -n$,
	it follows that $f-\varepsilon/2\leq (\vartheta\cdot S)_T+g(S_T)$ and therefore $\langle f-\varepsilon/2,\mu\rangle>0$, 
	which shows that $b>0$ and consequently $b=+\infty$. 
	Hence, for $\mu\not\in\mathcal{M}(\mathcal{Q})$ it holds $\sup_{h\in H}\langle h,\mu\rangle=+\infty$, 
	so that $\mu\notin H^\circ$. In summary, one has 
	$H^\circ=\{\lambda\mu : \lambda\in\mathbb{R}_+,\,\mu\in\mathcal{M}(\mathcal{Q})\}$.

	In view of Proposition \ref{thm:main.tight.liminf} we conclude that 
    $f\in H$ if and only if $\langle f,\mu\rangle \leq 1$ for all
	$\mu\in H^\circ$ if and only if $\langle f,\mu\rangle \leq 0$ for all
	$\mu\in\mathcal{M}(\mathcal{Q})$.
\end{proof}

\bibliographystyle{abbrv}
% \bibliography{bib}

%-----------------------   bibliography   ----------------------------------------
\end{document}